\theoremstyle{plain}
\newtheorem{theorem}{Theorem}
\newtheorem{corollary}[theorem]{Corollary}
\newtheorem{lemma}[theorem]{Lemma}
\newtheorem{proposition}[theorem]{Proposition}
\newtheorem{problem}[theorem]{Problem}
\newtheorem{conjecture}[theorem]{Conjecture}
\theoremstyle{definition}
\newtheorem{definition}[theorem]{Definition}
\newtheorem{example}[theorem]{Example}
\newtheorem{remark}[theorem]{Remark}
\newtheorem{question}[theorem]{Question}
\begin{document}

\begin{frontmatter}

\title{Strongly ultrametric preserving functions}
\author{Oleksiy Dovgoshey}
\ead{oleksiy.dovgoshey@gmail.com, oleksiy.dovgoshey@utu.fi}

\affiliation{organization={Institute of Applied Mathematics and Mechanics of the NAS of Ukraine}, %Department and Organization
city={Sloviansk},
country={Ukraine}}

\affiliation{organization={Department of Mathematics and Statistics, University of Turku},%Department and Organization
state={Turku},
country={Finland}}

\begin{abstract}
An ultrametric preserving function $f$ is said to be strongly ultrametric preserving if ultrametrics $d$ and $f \circ d$ define the same topology on $X$ for each ultrametric space $(X,d)$. The set of all strongly ultrametric preserving functions is characterized by several distinctive features. In particular, it is shown that an ultrametric preserving $f$ belongs to this set iff $f$ preserves the property to be compact.
\end{abstract}

\begin{keyword}
Compact ultrametric space \sep totally bounded ultrametric space \sep ultrametric preserving function \sep strongly ultrametric preserving function

\MSC[2020] Primary 54E35 \sep Secondary 26A30
\end{keyword}

\end{frontmatter}

\section{Introduction}

The following concept was introduced by Jacek Jachymski and Filip Turobo\'{s} in \cite{JTRRACEFNSAMR2020}.

\begin{definition}
Let $\mathbf{A}_1$ and $\mathbf{A}_2$ be two classes of semimetric spaces. We say that a function $f \colon [0, \infty) \to [0, \infty)$ is $\mathbf{A}_1$- $\mathbf{A}_2$-preserving if $(X, f \circ d)$ belongs to $\mathbf{A}_2$ for each $(X,d) \in \mathbf{A}_1$.
\end{definition}

This definition allows us to formulate

\begin{problem}\label{problem_2}
Let $\mathbf{A}_1$ and $\mathbf{A}_2$ be two classes of semimetric spaces. Find the characteristic properties of $\mathbf{A}_1$ - $\mathbf{A}_2$ preserving functions.
\end{problem}

We denote by $\mathbf{P}_{\mathbf{A}_1, \mathbf{A}_2}$ the set of all $\mathbf{A}_1$ - $\mathbf{A}_2$ preserving functions, and, for simplicity, write
$$
\mathbf{P}_{\mathbf{A}} := \mathbf{P}_{\mathbf{A}, \mathbf{A}}.
$$

\begin{definition}
Let $(X,d)$ be a semimetric space. Then

$ (X,d) \in \mathbf{CU}$ iff $(X,d)$ is a compact ultrametric space;

$ (X,d) \in \mathbf{M}$ iff $(X,d)$ is a metric space;

$ (X,d) \in \mathbf{NUDU}$ iff $(X,d)$ is not an uniformly discrete ultrametric space;

$ (X,d) \in \mathbf{TBU}$ iff $(X,d)$ is a totally bounded ultrametric space;

$ (X,d) \in \mathbf{U}$ iff $(X,d)$ is an ultrametric space;

$ (X,d) \in \mathbf{U}3$ iff $(X,d)$ is an ultrametric space and $|X| = 3$.
\end{definition}

In what follows we also write $\mathbf{PT}$ for the set of all strongly ultrametric preserving functions.

The main purpose of the paper is to find solutions of Problem~\ref{problem_2} for:
$$
\mathbf{A}_1 = \mathbf{A}_2 = \mathbf{CU};
\quad
\mathbf{A}_1 = \mathbf{CU} \quad \textrm{and} \quad \mathbf{A}_2= \mathbf{TBU}; \quad \mathbf{A}_1 = \mathbf{A}_2 = \mathbf{NUDU}.
$$

The paper is organized as follows.
The next section contains the sequential characterizations of $\mathbf{CU}$, $\mathbf{TBU}$, $\mathbf{NUDU}$ and, moreover, some definitions and facts related to ultrametric preserving functions. The main results are formulated and proven in Section~3.

Theorem~\ref{th22} contains a general sufficient condition under which the inclusion $\mathbf{P}_{\mathbf{A}} \subseteq \mathbf{P}_{\mathbf{U}}$ holds for $\mathbf{A} \subseteq \mathbf{U}$. Theorem~\ref{th18} shows that
$$\mathbf{PT}= \mathbf{P}_{\mathbf{CU}}= \mathbf{P}_{\mathbf{TBU}} =\mathbf{P}_{\mathbf{CU}, \mathbf{TBU}}= \mathbf{P}_{\mathbf{NUDU}}.
$$

A dual form of Theorem~\ref{th18} is given in Proposition~\ref{prop_27}.

Fourth section of the paper contains Proposition~\ref{prop_22} characterizing the ultrametric preserving functions via a functional equation. Corollary~\ref{cor_34} shows that an amenable $f$ is increasing and subadditive iff $f \in \mathbf{P}_{\mathbf{M}} \cap \mathbf{P}_{\mathbf{U}}$.

Two conjectures connected with boundedly compact ultrametric spaces and locally finite ones are formulated in the final Section~5.

\section{Preliminaries}

Let us start from the classical notion of metric space introduced by Maurice
Fr\'{e}chet in his thesis \cite{Fre1906RdCMdP}.

A \emph{metric} on a set $X$ is a function $d: X \times X \to [0, \infty)$ such that for all $x, y, z \in X$:
\begin{itemize}
\item[$(i)$] $d(x, y) \geqslant 0$ with equality if and only if $x = y$, the \emph{positivity property};

\item[$(ii)$] $d(x, y) = d(y, x)$, the \emph{symmetry property};

\item[$(iii)$] $d(x, y) \leqslant d(x, z) + d(z, y)$, the \emph{triangle inequality}.
\end{itemize}

An useful generalization of the notion of metric space is the concept of
semimetric space.

\begin{definition}\label{def_1.1}
Let $X$ be a set and let $d : X \times X \to [0, \infty)$ be a symmetric function. The function $d$ is a \emph{semimetric} on $ X$ if it satisfies the positivity property.
\end{definition}

If $d$ is a semimetric on $X$, we say that $(X, d)$ is a \emph{semimetric space}. A metric space $(X,d)$ is \emph{ultrametric} if the \emph{strong triangle inequality}
$$
d(x,y) \leqslant \max \{ d(x,z), d(z,y) \}
$$
holds for all $x,y,z \in X$.

Let $(X,d)$ be a metric space. A sequence $(x_n)_{n\in \mathbb{N}}$ of points of a metric space $(X, d)$ is said to \emph{converge} to a point $p \in X$ if
\begin{equation*}
\lim_{n \to \infty} d(x_n, p) =0.
\end{equation*}

Recall also that a sequence $(x_n)_{n\in\mathbb{N}}$ of points of $X$ is a \emph{Cauchy sequence} in a metric space $(X,d)$ iff
$$
\lim_{\substack{n \to \infty \\ m \to \infty}} d(x_n, x_m) = 0.
$$

\begin{proposition}\label{pr_BW}
A subset $A$ of a metric space is
compact if and only if every infinite sequence of points of $A$ contains a subsequence which converges to a point of $A$.
\end{proposition}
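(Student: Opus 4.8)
The plan is to prove the two implications separately, leaning on the fact that in a metric space compactness coincides with the conjunction of completeness and total boundedness.

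For the implication ``compact $\Rightarrow$ sequentially compact'' I would argue by contradiction. Let $(x_n)_{n\in\mathbb{N}}$ be a sequence in $A$ no subsequence of which converges to a point of $A$. If the set of values $\{x_n : n \in \mathbb{N}\}$ were finite, some value would be attained for infinitely many indices, and the corresponding constant subsequence would converge in $A$; so this set is infinite and, in particular, no point $p \in A$ is the limit of any subsequence of $(x_n)$. Then for every $p \in A$ there is a radius $r_p > 0$ such that the open ball $B(p, r_p)$ contains $x_n$ for only finitely many $n$. The family $\{B(p, r_p) : p \in A\}$ is an open cover of $A$, and a finite subcover $B(p_1, r_{p_1}), \dots, B(p_k, r_{p_k})$ would force $\{x_n : n \in \mathbb{N}\}$ to be finite, a contradiction.

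For the converse, suppose every sequence in $A$ has a subsequence converging to a point of $A$. First, $A$ is totally bounded: otherwise there is $\varepsilon > 0$ admitting no cover of $A$ by finitely many $\varepsilon$-balls, and a greedy selection produces a sequence $(x_n)_{n\in\mathbb{N}}$ in $A$ with $d(x_n, x_m) \geqslant \varepsilon$ whenever $n \neq m$, which has no Cauchy, hence no convergent, subsequence. Second, $A$ is complete: any Cauchy sequence in $A$ has, by hypothesis, a subsequence converging to some $a \in A$, and a Cauchy sequence possessing a convergent subsequence converges to the same limit. It then remains to invoke the classical fact that a complete, totally bounded metric space is compact. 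To keep the argument self-contained I would prove this directly: given an open cover $\mathcal{U}$ of $A$ with no finite subcover, use total boundedness to build a nested sequence $A \supseteq A_1 \supseteq A_2 \supseteq \cdots$ of nonempty sets with diameters tending to $0$, each $A_n$ covered by no finite subfamily of $\mathcal{U}$ (obtaining $A_n$ by intersecting $A_{n-1}$ with one of finitely many balls of radius $2^{-n}$ that still defies finite covering); choosing $x_n \in A_n$ yields a Cauchy sequence whose limit $a \in A$ (by completeness) lies in some $U \in \mathcal{U}$, and then $A_n \subseteq U$ for all large $n$, contradicting the choice of $A_n$.

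I expect the last step, ``complete and totally bounded $\Rightarrow$ compact'', to be the main obstacle: the bookkeeping in constructing the nested sets $A_n$ and in checking that shrinking diameters force the eventual inclusion $A_n \subseteq U$ needs a little care. An alternative that sidesteps it is to observe that total boundedness makes $A$ separable, hence second countable, hence Lindel\"of, so that every open cover of $A$ has a countable subcover $\{U_m\}_{m\in\mathbb{N}}$; if no finite subfamily covered $A$ one could pick $x_n \notin U_1 \cup \dots \cup U_n$, and a subsequence of $(x_n)_{n\in\mathbb{N}}$ converging to some $a \in A$ would contradict $a \in U_m$ for some $m$. Either route finishes the proof, and everything else is routine.
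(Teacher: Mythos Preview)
Your argument is correct and self-contained: both implications are handled cleanly, and either of your two routes for ``complete and totally bounded $\Rightarrow$ compact'' works without difficulty.

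There is nothing to compare against, however, because the paper does not prove this proposition. It is stated as a known fact and immediately followed by a reference to a textbook (Searc\'{o}id, \emph{Metric Spaces}, p.~206); the proposition is used later only as a black box in the proofs of the main results about ultrametric preserving functions. So your proposal goes well beyond what the paper itself supplies: you give a full proof where the paper is content to cite one.
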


This result and a long list of distinct criteria of compactness can be found, for example, in \cite[p. 206]{Sea2007}.

We will also use the following proposition.

\begin{proposition}\label{pr_7}
A subset $A$ of a metric space $(X, d)$ is totally bounded if and only if every infinite sequence of points of $A$ contains a Cauchy subsequence.
\end{proposition}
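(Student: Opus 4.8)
The plan is to establish the two implications separately, each by a classical argument.

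\emph{Necessity.} I would assume $A$ is totally bounded and take an arbitrary infinite sequence $(x_n)_{n \in \mathbb{N}}$ of points of $A$, then extract a Cauchy subsequence by a diagonal procedure. For every $k \in \mathbb{N}$, total boundedness provides a finite cover of $A$ by sets of diameter less than $1/k$. Inductively I would build a decreasing chain of infinite index sets $\mathbb{N} \supseteq S_1 \supseteq S_2 \supseteq \dots$ such that, for each $k$, all points $x_n$ with $n \in S_k$ lie in a single member of the $k$-th cover; this is possible because, by the pigeonhole principle, at least one of the finitely many members of that cover contains $x_n$ for infinitely many $n$ in the previously constructed index set. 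Choosing $n_1 < n_2 < \dots$ with $n_k \in S_k$ for every $k$ then yields a subsequence $(x_{n_k})$ for which $d(x_{n_i}, x_{n_j}) < 1/k$ whenever $i, j \geq k$, so $(x_{n_k})$ is a Cauchy sequence.

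\emph{Sufficiency.} Here I would argue by contraposition. Suppose $A$ is not totally bounded. Then there is some $\varepsilon > 0$ for which no finite union of open balls of radius $\varepsilon$ covers $A$. I would choose $x_1 \in A$ arbitrarily and, having chosen $x_1, \dots, x_n$, pick $x_{n+1} \in A \setminus \bigcup_{i=1}^{n} B(x_i, \varepsilon)$, which is nonempty by the choice of $\varepsilon$. The resulting sequence satisfies $d(x_n, x_m) \geq \varepsilon$ for all distinct $n$ and $m$, hence has no Cauchy subsequence, and so $A$ fails the stated sequential property.

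The argument is entirely standard. The only point requiring genuine care is the bookkeeping in the diagonal construction — ensuring that every $S_k$ remains infinite and that the chosen indices $n_k$ are strictly increasing — and I do not anticipate any real obstacle beyond that.
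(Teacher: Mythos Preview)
Your argument is correct and entirely standard; both directions are handled cleanly, and the diagonal bookkeeping you flag is indeed the only place requiring care.

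Note, however, that the paper does not supply its own proof of this proposition at all: it simply cites Theorem~7.8.2 in Searc\'{o}id's \emph{Metric Spaces} and moves on. So there is no ``paper's approach'' to compare against. Your write-up would serve perfectly well as a self-contained replacement for that citation, should one be desired.
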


(See, for example, Theorem~7.8.2 in \cite{Sea2007}.)

Each compact space is totally bounded.
In particular,
\begin{equation}\label{s2_eq3}
\mathbf{CU} \subseteq \mathbf{TBU}.
\end{equation}
holds.

Some new criteria for compactness and total boundedness of ultrametric spaces were recently obtained in \cite{DS2021TA, DK2021AC, BDK2021TAG}.

Compact ultrametric spaces and totally bounded ones are the main objects of study in the present paper, but we also need the concept of uniformly discrete ultrametric space.

\begin{definition}\label{def_1.5}
A metric space $(X, d)$ is said to be \emph{uniformly discrete} if there is $\varepsilon \in (0, \infty)$ such that
$$
d(x,y) > \varepsilon
$$
whenever $x$ and $y$ are different points of $ X$.
\end{definition}

The following result directly follows from Definition~\ref{def_1.5}.

\begin{proposition}\label{prop_11}
Let $(X,d)$ be an ultrametric space. Then $(X,d)$ is a not uniformly discrete space iff there exist two sequences $(x_n)_{n \in \mathbb{N}}$ and $(y_n)_{n \in \mathbb{N}}$ of points of $X$ such that
$$
\lim_{n\to \infty} d(x_n, y_n) =0
$$
and $x_n \neq y_n$ for every $n \in \mathbb{N}$.
\end{proposition}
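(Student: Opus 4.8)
The plan is to simply unwind Definition~\ref{def_1.5} together with its logical negation; in fact no property peculiar to ultrametrics is needed here, only positivity and the numerical inequality $d(x,y)>\varepsilon$, so the argument would go through verbatim for an arbitrary semimetric space.

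For the forward implication, suppose $(X,d)$ is not uniformly discrete. Negating the quantifier structure of Definition~\ref{def_1.5}, this means precisely that for every $\varepsilon\in(0,\infty)$ there exist distinct points $x,y\in X$ with $d(x,y)\leqslant\varepsilon$. I would then apply this with the test values $\varepsilon=\frac1n$, $n\in\mathbb{N}$: for each $n$ choose $x_n\neq y_n$ in $X$ with $d(x_n,y_n)\leqslant\frac1n$. The resulting sequences $(x_n)_{n\in\mathbb{N}}$ and $(y_n)_{n\in\mathbb{N}}$ satisfy $x_n\neq y_n$ for all $n$ and $\lim_{n\to\infty}d(x_n,y_n)=0$, as required.

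For the converse, assume sequences $(x_n)_{n\in\mathbb{N}}$ and $(y_n)_{n\in\mathbb{N}}$ with $x_n\neq y_n$ for every $n$ and $\lim_{n\to\infty}d(x_n,y_n)=0$ are given, and suppose toward a contradiction that $(X,d)$ is uniformly discrete, say with constant $\varepsilon\in(0,\infty)$ as in Definition~\ref{def_1.5}. Since $x_n$ and $y_n$ are distinct, we get $d(x_n,y_n)>\varepsilon$ for every $n\in\mathbb{N}$, which contradicts $\lim_{n\to\infty}d(x_n,y_n)=0$. Hence $(X,d)$ is not uniformly discrete.

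I do not expect any genuine obstacle: the only point requiring a little attention is getting the negation of the ``there is $\varepsilon$ such that $d(x,y)>\varepsilon$ for all distinct $x,y$'' statement right and using the auxiliary values $\varepsilon=\frac1n$ to manufacture the two sequences. This matches the remark preceding the proposition that it follows directly from Definition~\ref{def_1.5}.
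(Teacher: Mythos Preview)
Your proposal is correct and is exactly the intended argument: the paper itself gives no detailed proof, remarking only that the result ``directly follows from Definition~\ref{def_1.5},'' and what you have written is precisely this direct unwinding of the definition and its negation. Your observation that the ultrametric hypothesis plays no role is also accurate.
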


\begin{remark}\label{rem_12}
To the best of the author's knowledge, Proposition~\ref{prop_11} is new.
\end{remark}

We now turn our attention to functions that preserve metrics.
We will say that a function $f \colon [0, \infty) \to [0, \infty)$ is \emph{metric preserving} iff $f \in \mathbf{P}_{\mathbf{M}}$. Similarly $f \colon [0, \infty) \to [0, \infty)$ is \emph{ultrametric preserving} iff $f \in \mathbf{P}_{\mathbf{U}}$.

It is straightforward to see that each metric preserving $f\colon [0, \infty) \rightarrow [0, \infty)$ must be \emph{amenable}, i.e., \(f(0) = 0\) and \(f(x) > 0\) for every \(x \in (0, \infty) \).

\begin{example}\label{ex_11}
Let $f\colon [0, \infty) \rightarrow [0, \infty)$ be amenable.
If $f$ is increasing and subaddive, then $f \in \mathbf{P}_{\mathbf{M}}$ (see Theorem~1 in \cite[p.~5]{Dobos1998})).
It was proved by Dobo\v{s} \cite{D1996PAMS} that the extended Cantor function $\widehat{G}$ is subadditive,
$$
\widehat{G} (x+y) \leqslant \widehat{G} (x) + \widehat{G} (y)
$$
for all $x,y \in [0, \infty)$. Since $\widehat{G}$ is increasing, $\widehat{G} \in \mathbf{P}_{\mathbf{M}}$ holds.
\end{example}

\begin{remark}\label{rem_13}
The standard Cantor ternary function $G$ is defined on the set $[0,1]$ and, consequently, $G \notin \mathbf{P}_{\mathbf{M}}$. Using the Cantor function $G$ we can introduce $\widehat{G}$ as
$$
\widehat{G}(x) := \left\{
\begin{array}{ll}
G(x) & \quad \hbox{if}\quad x \in [0,1], \\
1 & \quad \hbox{if} \quad x \in (1, \infty).
\end{array}
\right.
$$
A systematic survey of properties of $G$ can be found in \cite{DMRV2006EM}.
\end{remark}

A function $f \in \mathbf{P}_{\mathbf{M}}$ is said to be \emph{strongly metric preserving} (see \cite[p.~109]{MDED}) if the metrics $d$ and $f \circ d$ define the same topology on $X$ for every $(X,d) \in
\mathbf{M}$.

Let us define now the set $\mathbf{PT}$ of all strongly ultrametric preserving functions.

\begin{definition}\label{def11}
An ultrametric preserving function $f$ is said to be \emph{strongly ultrametric preserving} if $d$ and $f \circ d$ define the same topology for every ultrametric space $(X,d)$.
\end{definition}

The concept of metric preserving functions can be traced back to Wilson \cite{Wilson1935}. Similar problems were
considered by Blumenthal in \cite{Blumenthal1936}. The theory of metric preserving functions was developed by Bors\'{\i}k, Dobo\v{s}, Piotrowski, Vallin and other mathematicians \cite{BD1981MS, Borsik1988, Dobos1996, Dobos1994, Dobos1996a, Dobos1997, V1997RAE, V1998AMUC, V1998IJMMS, Vallin2000, PT2014FPTA, V2002TMMP}.
See also lectures by Dobo\v{s} \cite{Dobos1998}, an introductory paper by Corazza \cite{Corazza1999}.

The study of ultrametric preserving functions begun by P.~Pongsriiam and I.~Termwuttipong in 2014~\cite{PTAbAppAn2014} and was continued in \cite{Dov2020MS, VD2021MS}. The following theorem, from \cite{PTAbAppAn2014} gives us an easy and complete description of the set $\mathbf{P}_{\mathbf{U}}$.

\begin{theorem}\label{t2.4}
A function \( f\colon [0, \infty) \rightarrow [0, \infty)\) is ultrametric preserving if and only if \(f\) is increasing and amenable.
\end{theorem}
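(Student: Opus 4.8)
The plan is to prove the two implications separately, using only the ultrametric axioms and one elementary observation: if $f$ is increasing, then $f(\max\{a,b\}) = \max\{f(a),f(b)\}$ for all $a,b \in [0,\infty)$, i.e. $f$ commutes with the binary maximum.

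For the ``if'' direction, suppose $f$ is increasing and amenable, and let $(X,d)$ be an arbitrary ultrametric space; I must show that $f \circ d$ is again an ultrametric. Symmetry of $f\circ d$ is inherited from symmetry of $d$. For the positivity property, $f(d(x,x)) = f(0) = 0$, while $x \ne y$ forces $d(x,y) > 0$, hence $f(d(x,y)) > 0$ by amenability. For the strong triangle inequality, I would combine monotonicity of $f$ with the displayed identity: for all $x,y,z \in X$,
$$
f(d(x,y)) \leqslant f\bigl(\max\{d(x,z),d(z,y)\}\bigr) = \max\{f(d(x,z)),\, f(d(z,y))\}.
$$
(The ordinary triangle inequality is then automatic, since $\max\{a,b\} \leqslant a+b$, so $f\circ d$ is in particular a metric.)

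For the ``only if'' direction, assume $f \in \mathbf{P}_{\mathbf{U}}$. To get amenability, apply $f$ to the two-point ultrametric space $(\{a,b\},d)$ with $d(a,b) = t$, for each fixed $t \in (0,\infty)$: since $f\circ d$ is a metric, its positivity property yields $f(0) = 0$ and $f(t) > 0$. To show $f$ is increasing, fix $0 < s < t$ and consider $X = \{a,b,c\}$ with $d(a,b) = s$, $d(a,c) = d(b,c) = t$, and $d(x,x)=0$; a routine check of the strong triangle inequality on all triples shows $(X,d) \in \mathbf{U}$. Then $f\circ d$ must also satisfy the strong triangle inequality, and applying it to the triple with $c$ in the middle gives $f(s) = f(d(a,b)) \leqslant \max\{f(d(a,c)),\, f(d(c,b))\} = f(t)$. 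Since also $f(0) = 0 \leqslant f(s)$, the function $f$ is increasing on $[0,\infty)$.

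I do not expect a genuine obstacle here; the proof is elementary. The only point deserving care is the equivalence that drives both directions — that monotonicity of $f$ is precisely what lets $f$ pass through a maximum — together with the explicit construction, in the converse direction, of a three-point ultrametric in which the smaller value $s$ is the distance ``dominated'' by the repeated larger value $t$, which is exactly the configuration that extracts $f(s) \leqslant f(t)$ from the strong triangle inequality.
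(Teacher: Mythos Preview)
Your proof is correct in both directions. Note, however, that the paper does not actually supply its own proof of this theorem: it is quoted from Pongsriiam and Termwuttipong \cite{PTAbAppAn2014} and used as a known result. That said, the three-point constructions you use for the ``only if'' direction are exactly the ones the paper deploys in the proof of Lemma~\ref{lem_15} (the equality $\mathbf{P}_{\mathbf{U3},\mathbf{U}} = \mathbf{P}_{\mathbf{U}}$): an equilateral triangle with side $c$ to force $f(c)>0$, and an isosceles triangle with legs $c_2$ and base $c_1<c_2$ to force $f(c_1)\leqslant f(c_2)$. So your argument is the standard one and is fully consistent with the paper's methods; there is nothing to compare beyond the fact that the paper outsources this particular statement.
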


\begin{example}\label{ex_14}
The extended Cantor function $\widehat{G}$ is amenable and increasing. Hence $\widehat{G} \in \mathbf{P}_{\mathbf{U}}$ holds.
\end{example}

Theorem~\ref{t2.4} was generalized in \cite{Dov2019a} to the special case of the so-called ultrametric distances. These distances were introduced by S.~Priess-Crampe and P.~Ribenboim in 1993 \cite{PR1993AMSUH} and studied in \cite{PR1996AMSUH, PR1997AMSUH, Rib1996PMH, Rib2009JoA}. The functions preserving $p$-adic metrics and some other classes of ultrametrics were first considered in \cite{VD2021MS}.

\begin{remark}\label{rem_1.8}
The metric preserving functions can be considered as a special case of metric products (= metric preserving functions of several variables). See, for example, \cite{BD1981, BFS2003BazAaG, DPK2014MS, FS2002, HMCM1991, Kaz2021CoPS}. It is interesting to note that an important special class of ultrametric preserving functions of two variables was first considered in 2009~\cite{DM2009}.
\end{remark}

%%%%%%%%%%%%%%%%%%%%%%%%%%%%%%%%%%%%

\section{Main results and some lemmas}

Let us start from the following useful fact.

\begin{proposition}\label{lem_3.1}
Let $\mathbf{A}_1$, $\mathbf{A}_2$, $\mathbf{A}_3$, $\mathbf{A}_4$ be classes of semimetric spaces. Then the following statements hold:
\begin{itemize}
\item[$(i)$] If $\mathbf{A}_1 \subseteq \mathbf{A}_2$, then
$\mathbf{P}_{\mathbf{A}_1, \mathbf{A}_3} \supseteq \mathbf{P}_{\mathbf{A}_2, \mathbf{A}_3}$;
\item[$(ii)$]
If $\mathbf{A}_3 \subseteq \mathbf{A}_4$, then
$
\mathbf{P}_{\mathbf{A}_2, \mathbf{A}_4} \supseteq \mathbf{P}_{\mathbf{A}_2, \mathbf{A}_3}.
$
\end{itemize}
\end{proposition}

\begin{proof}
It follows directly from Definition~1.
\end{proof}

\begin{corollary}\label{cor_3.2}
The inclusions
\begin{equation}\label{cor3.2_3.3}
\mathbf{P}_{\mathbf{U3}, \mathbf{U}} \supseteq \mathbf{P}_{\mathbf{CU}, \mathbf{TBU}}, \quad \mathbf{P}_{\mathbf{U3}, \mathbf{U}} \supseteq \mathbf{P}_{\mathbf{CU}} \quad \textit{and} \quad \mathbf{P}_{\mathbf{U3}, \mathbf{U}}\supseteq \mathbf{P}_{\mathbf{TBU}}
\end{equation}
hold.
\end{corollary}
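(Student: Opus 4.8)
The plan is to deduce all three inclusions purely formally from Proposition~\ref{lem_3.1}, once we record the elementary chain of class inclusions
$$
\mathbf{U3} \subseteq \mathbf{CU} \subseteq \mathbf{TBU} \subseteq \mathbf{U}.
$$
The only nontrivial link here is $\mathbf{U3} \subseteq \mathbf{CU}$: a three-point ultrametric space is a finite metric space, hence compact, so it lies in $\mathbf{CU}$. The inclusion $\mathbf{CU} \subseteq \mathbf{TBU}$ is exactly \eqref{s2_eq3}, and $\mathbf{TBU} \subseteq \mathbf{U}$ is immediate from the definitions. So I expect no real obstacle; the ``hard part'' is merely bookkeeping, namely applying monotonicity in the source argument and in the target argument of $\mathbf{P}_{\cdot,\cdot}$ in the correct order.

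First I would handle the second inclusion, $\mathbf{P}_{\mathbf{U3},\mathbf{U}} \supseteq \mathbf{P}_{\mathbf{CU}}$. Writing $\mathbf{P}_{\mathbf{CU}} = \mathbf{P}_{\mathbf{CU},\mathbf{CU}}$, I first enlarge the target class: since $\mathbf{CU} \subseteq \mathbf{U}$, Proposition~\ref{lem_3.1}$(ii)$ gives $\mathbf{P}_{\mathbf{CU},\mathbf{CU}} \subseteq \mathbf{P}_{\mathbf{CU},\mathbf{U}}$. Then I shrink the source class: since $\mathbf{U3} \subseteq \mathbf{CU}$, Proposition~\ref{lem_3.1}$(i)$ gives $\mathbf{P}_{\mathbf{CU},\mathbf{U}} \subseteq \mathbf{P}_{\mathbf{U3},\mathbf{U}}$. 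Concatenating these two inclusions yields the claim.

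The first inclusion is obtained in the same way with one extra step for the target. Starting from $\mathbf{P}_{\mathbf{CU},\mathbf{TBU}}$, I use $\mathbf{TBU} \subseteq \mathbf{U}$ and Proposition~\ref{lem_3.1}$(ii)$ to get $\mathbf{P}_{\mathbf{CU},\mathbf{TBU}} \subseteq \mathbf{P}_{\mathbf{CU},\mathbf{U}}$, and then $\mathbf{U3}\subseteq\mathbf{CU}$ with Proposition~\ref{lem_3.1}$(i)$ gives $\mathbf{P}_{\mathbf{CU},\mathbf{U}}\subseteq \mathbf{P}_{\mathbf{U3},\mathbf{U}}$. For the third inclusion I write $\mathbf{P}_{\mathbf{TBU}} = \mathbf{P}_{\mathbf{TBU},\mathbf{TBU}}$, enlarge the target via $\mathbf{TBU}\subseteq\mathbf{U}$ to reach $\mathbf{P}_{\mathbf{TBU},\mathbf{U}}$, and then shrink the source via $\mathbf{U3}\subseteq\mathbf{CU}\subseteq\mathbf{TBU}$ (again Proposition~\ref{lem_3.1}$(i)$) to reach $\mathbf{P}_{\mathbf{U3},\mathbf{U}}$. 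In each case the argument is a two- or three-term chain of set inclusions, so the proof is short; the essential content is the observation that finite ultrametric spaces are compact, which places $\mathbf{U3}$ at the bottom of the chain.
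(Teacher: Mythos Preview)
Your proof is correct and follows exactly the same approach as the paper: the paper simply records the chain $\mathbf{U3} \subseteq \mathbf{CU} \subseteq \mathbf{TBU} \subseteq \mathbf{U}$ and invokes Proposition~\ref{lem_3.1}. You have merely spelled out the individual applications of parts $(i)$ and $(ii)$ in more detail than the paper does.
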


\begin{proof}
It follows from
$$
\mathbf{U3} \subseteq \mathbf{CU} \subseteq \mathbf{TBU} \subseteq \mathbf{U}
$$
by Proposition~\ref{lem_3.1}.
\end{proof}

\begin{lemma}\label{lem_15}
The equalities
\begin{equation}\label{lem17_eq1}
\mathbf{P}_{\mathbf{U3}, \mathbf{U}}= \mathbf{P}_{\mathbf{U}},
\end{equation}
and
\begin{equation}\label{lem17_eq3}
\mathbf{P}_{\mathbf{U3}, \mathbf{U}}= \mathbf{P}_{\mathbf{U3}}
\end{equation}
hold.
\end{lemma}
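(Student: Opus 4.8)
The plan is to prove the two equalities by a chain of inclusions, using Theorem~\ref{t2.4} together with Corollary~\ref{cor_3.2}. The key observation is that a function $f\colon[0,\infty)\to[0,\infty)$ is ultrametric preserving precisely when it is increasing and amenable, and both of these properties are detected by three-point ultrametric spaces alone, so $\mathbf{P}_{\mathbf{U3},\mathbf{U}}$ should already coincide with $\mathbf{P}_{\mathbf{U}}$.

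First I would establish $\mathbf{P}_{\mathbf{U}}\subseteq\mathbf{P}_{\mathbf{U3},\mathbf{U}}$. This is immediate from Proposition~\ref{lem_3.1}$(i)$ applied to $\mathbf{U3}\subseteq\mathbf{U}$, or simply from the definitions: if $f\circ d$ is an ultrametric for every ultrametric $d$, then in particular it is an ultrametric for every ultrametric on a three-point set. For the reverse inclusion $\mathbf{P}_{\mathbf{U3},\mathbf{U}}\subseteq\mathbf{P}_{\mathbf{U}}$, I would take $f\in\mathbf{P}_{\mathbf{U3},\mathbf{U}}$ and show $f$ is amenable and increasing, so that Theorem~\ref{t2.4} gives $f\in\mathbf{P}_{\mathbf{U}}$. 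Amenability: for any $t>0$, consider a three-point ultrametric space with two of the nonzero distances equal to $t$ and the third also $t$ (an equilateral triangle on $\{a,b,c\}$ with all pairwise distances $t$); since $f\circ d$ must be an ultrametric it must satisfy positivity, forcing $f(t)>0$, and $f(0)=0$ follows since $f\circ d$ must vanish on the diagonal. Monotonicity: given $0<s\leqslant t$, build the three-point ultrametric space $\{a,b,c\}$ with $d(a,b)=s$ and $d(a,c)=d(b,c)=t$ (this is a valid ultrametric since $\{s,t,t\}$ satisfies the strong triangle inequality). Applying $f$, the triple $\{f(s),f(t),f(t)\}$ must satisfy the strong triangle inequality on $\{a,b,c\}$, which forces $f(s)\leqslant\max\{f(t),f(t)\}=f(t)$. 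Hence $f$ is increasing, and Theorem~\ref{t2.4} yields \eqref{lem17_eq1}.

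For \eqref{lem17_eq3} it suffices to show $\mathbf{P}_{\mathbf{U3}}=\mathbf{P}_{\mathbf{U3},\mathbf{U}}$. The inclusion $\mathbf{P}_{\mathbf{U3},\mathbf{U}}\subseteq\mathbf{P}_{\mathbf{U3}}$ follows from Proposition~\ref{lem_3.1}$(ii)$ with $\mathbf{U3}\subseteq\mathbf{U}$, since an ultrametric space on three points belongs to $\mathbf{U3}$. For the opposite inclusion, let $f\in\mathbf{P}_{\mathbf{U3}}$; by the monotonicity-and-amenability argument above (which only used three-point spaces and only required $f\circ d$ to be an ultrametric on a three-point set, i.e. $f\circ d\in\mathbf{U3}$), $f$ is increasing and amenable, so $f\in\mathbf{P}_{\mathbf{U}}=\mathbf{P}_{\mathbf{U3},\mathbf{U}}$. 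Thus \eqref{lem17_eq3} holds as well.

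I do not expect a genuine obstacle here; the only point requiring a little care is making sure the auxiliary three-point configurations $\{s,t,t\}$ are indeed ultrametric (they are, because in any triple where the two largest values coincide the strong triangle inequality holds automatically) and that, in $\mathbf{P}_{\mathbf{U3}}$, the condition "$f\circ d$ is an ultrametric on a three-point set" is exactly membership in $\mathbf{U3}$, so that the same computations apply verbatim. Once these observations are in place, everything reduces to Theorem~\ref{t2.4} and the two parts of Proposition~\ref{lem_3.1}.
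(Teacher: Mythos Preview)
Your proposal is correct and follows essentially the same route as the paper: both use equilateral and isosceles three-point ultrametric spaces to verify amenability and monotonicity of any $f\in\mathbf{P}_{\mathbf{U3},\mathbf{U}}$, and then invoke Theorem~\ref{t2.4}. One small slip: for the inclusion $\mathbf{P}_{\mathbf{U3},\mathbf{U}}\subseteq\mathbf{P}_{\mathbf{U3}}$ you cite Proposition~\ref{lem_3.1}$(ii)$, but with $\mathbf{U3}\subseteq\mathbf{U}$ that proposition yields the \emph{reverse} inclusion $\mathbf{P}_{\mathbf{U3}}\subseteq\mathbf{P}_{\mathbf{U3},\mathbf{U}}$; the correct justification is exactly your parenthetical remark that an ultrametric on a three-point set automatically lies in $\mathbf{U3}$ (the underlying set does not change under $d\mapsto f\circ d$), which is precisely how the paper handles this step.
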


\begin{proof}
The trivial inclusion $\mathbf{U3} \subseteq {\mathbf{U}}$
and Proposition~\ref{lem_3.1} imply
$$
\mathbf{P}_{\mathbf{U3}, \mathbf{U}} \supseteq \mathbf{P}_{\mathbf{U}}.
$$
Thus \eqref{lem17_eq1} holds iff
\begin{equation}\label{lem17_eq2}
\mathbf{P}_{\mathbf{U3}, \mathbf{U}} \subseteq \mathbf{P}_{\mathbf{U}}.
\end{equation}

Suppose contrary that there is $f^{\ast} \in \mathbf{P}_{\mathbf{U3}, \mathbf{U}}$ such that
\begin{equation}\label{eq_3.2}
f^{\ast}\notin \mathbf{P}_{\mathbf{U}}.
\end{equation}
Let $(X,d)$, $X \colon= \{x_1, x_2, x_3\}$, belong to $\mathbf{U3}$.
The membership relation $f^{\ast} \in \mathbf{P}_{\mathbf{U3}, \mathbf{U}}$ implies
\begin{equation}\label{eq_3.2_2}
f^{\ast} (0)=f^{\ast} ( d(x_1, x_1) )=0.
\end{equation}
Moreover, by Theorem~\ref{t2.4}, every increasing and amenable function belongs to $\mathbf{P}_{\mathbf{U}}$.
Thus \eqref{eq_3.2} holds iff either there is $c>0$ such that
$$
f^{\ast}(c)=0
$$
or $f^{\ast}(x)>0$ holds for every $x>0$ but there are $c_1, c_2 \in (0, \infty)$ satisfying the inequalities
\begin{equation}\label{eq_3.3}
0< c_1 < c_2 \quad \textrm{and} \quad 0< f^{\ast}(c_2) < f^{\ast}(c_1).
\end{equation}

Let us consider the equilateral triangle $X = \{ x_1, x_2, x_3\}$ in which all sides have the length $c$,
\begin{equation}\label{eq_3.4}
d(x_1,x_2) = d(x_2, x_3) = d(x_3, x_1) =c.
\end{equation}
Then $(X,d) \in \mathbf{U3}$ and consequently $f^{\ast} \in \mathbf{P}_{\mathbf{U3}, \mathbf{U}}$ implies
\begin{equation}\label{eq_3.5}
(X,f^{\ast} \circ d) \in {\mathbf{U}},
\end{equation}
but using \eqref{eq_3.2_2} and \eqref{eq_3.4} we see that $f^{\ast}(d(x_i, x_j))=0$ for all $i,j \in \{ 1, 2, 3\} $ contrary to \eqref{eq_3.5}.

Similarly if $c_1$ and $c_2$ satisfy \eqref{eq_3.3} we can consider an isosceles triangle $X = \{x_1, x_2, x_3\}$ with
\begin{equation}\label{eq_3.6}
d(x_1, x_2) = d(x_2, x_3) = c_2 \quad \textrm{ and} \quad d(x_1, x_3) = c_1.
\end{equation}
Then $(X,d) \in \mathbf{U3}$ and, consequently, we have $(X, f^{\ast} \circ d) \in \mathbf{U}$. The strong triangle inequality implies
$$
f^{\ast}(d(x_1, x_3)) \leqslant \max \{ f^{\ast} (d (x_1, x_2)), f^{\ast} (d(x_2, x_3))\},
$$
which contradicts the second double inequality in~\eqref{eq_3.3}.

Thus \eqref{lem17_eq2} holds. Equality \eqref{lem17_eq1} follows.

Let us prove equality \eqref{lem17_eq3}.

The inclusion $\mathbf{U3} \subseteq {\mathbf{U}}$
and Proposition~\ref{lem_3.1} give us
$
\mathbf{P}_{\mathbf{U3}, \mathbf{U}} \supseteq \mathbf{P}_{\mathbf{U3}}.
$
Thus \eqref{lem17_eq3} holds iff
$$
\mathbf{P}_{\mathbf{U3}} \supseteq \mathbf{P}_{\mathbf{U3}, \mathbf{U}}.
$$
The last inclusion holds iff, for every $f \in \mathbf{P}_{\mathbf{U3}, \mathbf{U}}$ and each $(X,d) \in \mathbf{U3}$, the ultrametric space $(X, f \circ d)$ belongs to ${\mathbf{U3}}$, i.e.,
$$
|X| = 3
$$
holds. It remains to note that the last equality follows from $(X,d) \in {\mathbf{U3}}$ by definition.
\end{proof}

\begin{corollary}\label{cor_20}
The equality
\begin{equation}\label{cor20_eq1}
\mathbf{P}_{ \mathbf{U}}= \mathbf{P}_{\mathbf{U3}}
\end{equation}
holds.
\end{corollary}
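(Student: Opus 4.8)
The plan is to obtain \eqref{cor20_eq1} as an immediate consequence of Lemma~\ref{lem_15}, so essentially no new work is required. Lemma~\ref{lem_15} asserts the two equalities $\mathbf{P}_{\mathbf{U3}, \mathbf{U}} = \mathbf{P}_{\mathbf{U}}$ (this is \eqref{lem17_eq1}) and $\mathbf{P}_{\mathbf{U3}, \mathbf{U}} = \mathbf{P}_{\mathbf{U3}}$ (this is \eqref{lem17_eq3}). Since both of these describe the same set $\mathbf{P}_{\mathbf{U3}, \mathbf{U}}$, transitivity of equality yields
$$
\mathbf{P}_{\mathbf{U}} = \mathbf{P}_{\mathbf{U3}, \mathbf{U}} = \mathbf{P}_{\mathbf{U3}},
$$
which is exactly \eqref{cor20_eq1}. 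So the proof is a single line: ``Combine \eqref{lem17_eq1} and \eqref{lem17_eq3}.''

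There is no genuine obstacle here; all the content has already been extracted in the proof of Lemma~\ref{lem_15}. If one wanted a self-contained restatement of why the two halves hold, the only subtlety worth recalling is the one that drives \eqref{lem17_eq2}: an ultrametric preserving function must be amenable and increasing (Theorem~\ref{t2.4}), and both failure modes — a nonzero zero of $f^{\ast}$, or a strict order reversal $0 < f^{\ast}(c_2) < f^{\ast}(c_1)$ for $0 < c_1 < c_2$ — can already be defeated on a three-point space, using an equilateral triangle in the first case and an isosceles triangle with legs of length $c_2$ and base of length $c_1$ in the second. The equality \eqref{lem17_eq3} is then trivial because membership in $\mathbf{U3}$ forces $|X| = 3$, a condition preserved automatically under $d \mapsto f \circ d$. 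Hence I would simply cite Lemma~\ref{lem_15} and conclude.
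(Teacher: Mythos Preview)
Your proposal is correct and matches the paper's intent: Corollary~\ref{cor_20} is stated without a separate proof in the paper, being an immediate consequence of the two equalities \eqref{lem17_eq1} and \eqref{lem17_eq3} established in Lemma~\ref{lem_15}. Your one-line derivation via transitivity is exactly what is meant.
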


It should be mentioned here that Corollary~\ref{cor_20} is not new. See Theorem~9 in~\cite{PT2014FPTA} and Lemma~2.1 in \cite{Ish2021AEaEaaIoU}.

\begin{corollary}\label{cor_21}
The inclusions
\begin{equation}\label{cor21_eq1}
\mathbf{P}_{ \mathbf{U}} \supseteq \mathbf{P}_{\mathbf{CU}, \mathbf{TBU}}, \quad \mathbf{P}_{ \mathbf{U}} \supseteq \mathbf{P}_{\mathbf{CU}}
\quad \textit{and} \quad \mathbf{P}_{ \mathbf{U}} \supseteq \mathbf{P}_{ \mathbf{TBU}}
\end{equation}
hold.
\end{corollary}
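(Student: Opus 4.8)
The plan is to obtain all three inclusions at once by composing the two results immediately preceding the statement. Corollary~\ref{cor_3.2} already records the chain
$$
\mathbf{P}_{\mathbf{U3}, \mathbf{U}} \supseteq \mathbf{P}_{\mathbf{CU}, \mathbf{TBU}}, \quad \mathbf{P}_{\mathbf{U3}, \mathbf{U}} \supseteq \mathbf{P}_{\mathbf{CU}}, \quad \mathbf{P}_{\mathbf{U3}, \mathbf{U}} \supseteq \mathbf{P}_{\mathbf{TBU}},
$$
so the only thing left to do is to identify the common left-hand side $\mathbf{P}_{\mathbf{U3}, \mathbf{U}}$ with $\mathbf{P}_{\mathbf{U}}$.

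That identification is exactly equality \eqref{lem17_eq1} of Lemma~\ref{lem_15}, namely $\mathbf{P}_{\mathbf{U3}, \mathbf{U}} = \mathbf{P}_{\mathbf{U}}$. Substituting this into the three displayed inclusions converts each of them, term by term, into the corresponding inclusion of \eqref{cor21_eq1}, which finishes the argument.

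I do not anticipate any genuine obstacle here: both Corollary~\ref{cor_3.2} and Lemma~\ref{lem_15} have already been established, and the present corollary is a one-line consequence of reading them together. The only point deserving a moment's care is that the three inclusions of Corollary~\ref{cor_3.2} match, in order, the three asserted in the corollary; they visibly do.
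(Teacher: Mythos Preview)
Your proof is correct and follows exactly the paper's own argument: the paper's proof is the single line ``It follows from \eqref{cor3.2_3.3} and \eqref{lem17_eq1},'' which is precisely the combination of Corollary~\ref{cor_3.2} with the equality $\mathbf{P}_{\mathbf{U3}, \mathbf{U}} = \mathbf{P}_{\mathbf{U}}$ from Lemma~\ref{lem_15} that you spell out.
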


\begin{proof}
It follows from \eqref{cor3.2_3.3} and \eqref{lem17_eq1}.
\end{proof}

The following theorem can be considered as a partial generalization of Corollary~\ref{cor_21}.

\begin{theorem}\label{th22}
Let $\mathbf{A}$ and \(\mathbf{B}\) be two classes of ultrametric spaces. If for every $(X,d) \in \mathbf{U3}$ there is $(Y, \rho) \in \mathbf{A}$ such that $(X,d)$ is isometric to a subspace of $(Y, \rho)$, then the inclusion
\begin{equation}\label{th22_eq1}
\mathbf{P}_{\mathbf{A}, \mathbf{B}} \subseteq \mathbf{P}_{\mathbf{U}}
\end{equation}
holds.
\end{theorem}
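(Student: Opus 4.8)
The plan is to reduce everything to the already-proved Lemma~\ref{lem_15}, specifically the equality $\mathbf{P}_{\mathbf{U3}, \mathbf{U}} = \mathbf{P}_{\mathbf{U}}$. So it suffices to show $\mathbf{P}_{\mathbf{A}, \mathbf{B}} \subseteq \mathbf{P}_{\mathbf{U3}, \mathbf{U}}$, i.e., to take an arbitrary $f \in \mathbf{P}_{\mathbf{A}, \mathbf{B}}$ and an arbitrary three-point ultrametric space $(X, d) \in \mathbf{U3}$, and prove that $(X, f \circ d) \in \mathbf{U}$.

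First I would invoke the hypothesis: since $(X,d) \in \mathbf{U3}$, there exists $(Y, \rho) \in \mathbf{A}$ together with an isometric embedding $\Phi \colon X \to Y$, so that $\rho(\Phi(x), \Phi(x')) = d(x, x')$ for all $x, x' \in X$. Because $f \in \mathbf{P}_{\mathbf{A}, \mathbf{B}}$ and $(Y, \rho) \in \mathbf{A}$, we get $(Y, f \circ \rho) \in \mathbf{B}$; in particular, since $\mathbf{B}$ is a class of ultrametric spaces, $(Y, f \circ \rho)$ is an ultrametric space. The key point is that the strong triangle inequality and the positivity property are \emph{hereditary}: any subspace of an ultrametric space is again an ultrametric space. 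Hence the subspace $\Phi(X) \subseteq Y$ equipped with the restriction of $f \circ \rho$ is an ultrametric space, and via the bijection $\Phi$ this transports to $(X, f \circ d)$, because $(f \circ \rho)(\Phi(x), \Phi(x')) = f(\rho(\Phi(x), \Phi(x'))) = f(d(x,x')) = (f \circ d)(x, x')$. Therefore $(X, f \circ d) \in \mathbf{U}$.

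Since $(X,d) \in \mathbf{U3}$ was arbitrary, this shows $f \in \mathbf{P}_{\mathbf{U3}, \mathbf{U}}$, and thus $\mathbf{P}_{\mathbf{A}, \mathbf{B}} \subseteq \mathbf{P}_{\mathbf{U3}, \mathbf{U}}$. Combining with $\mathbf{P}_{\mathbf{U3}, \mathbf{U}} = \mathbf{P}_{\mathbf{U}}$ from Lemma~\ref{lem_15} yields the desired inclusion \eqref{th22_eq1}. One small bookkeeping remark is that $f$ should be checked to be amenable as part of verifying $f \in \mathbf{P}_{\mathbf{U}}$, but this is automatic: applying $f \circ \rho$ to a point gives $f(0) = 0$, and ultrametric-preservation of the three-point spaces already forces the rest, exactly as extracted inside the proof of Lemma~\ref{lem_15}; alternatively one simply notes that membership in $\mathbf{P}_{\mathbf{U3}, \mathbf{U}}$ is all that is needed, and the identification with $\mathbf{P}_{\mathbf{U}}$ has been done once and for all.

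I do not anticipate a serious obstacle here; the only thing to be careful about is the logical direction of the containments (the preserving-function sets reverse inclusions in the source class, as recorded in Proposition~\ref{lem_3.1}), and making explicit that "isometric to a subspace" lets us pull back the ultrametric property from $(Y, f\circ\rho)$ to $(X, f\circ d)$ — the heredity of the strong triangle inequality under passing to subsets is the conceptual crux, though it is essentially immediate from the definition.
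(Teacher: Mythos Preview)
Your proof is correct and follows essentially the same route as the paper's: reduce via the identification $\mathbf{P}_{\mathbf{U}} = \mathbf{P}_{\mathbf{U3}, \mathbf{U}} = \mathbf{P}_{\mathbf{U3}}$ from Lemma~\ref{lem_15}, embed an arbitrary $(X,d)\in\mathbf{U3}$ into some $(Y,\rho)\in\mathbf{A}$, use $f\in\mathbf{P}_{\mathbf{A},\mathbf{B}}$ and $\mathbf{B}\subseteq\mathbf{U}$ to conclude $(Y,f\circ\rho)$ is ultrametric, and then pull the ultrametric property back to $(X,f\circ d)$ by heredity and isometry. The only cosmetic difference is that the paper phrases the target as $(X,f\circ d)\in\mathbf{U3}$ rather than $\mathbf{U}$, which is equivalent since $|X|=3$ is unchanged.
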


\begin{proof}
It follows from equality \eqref{cor20_eq1} that \eqref{th22_eq1} holds iff
\begin{equation}\label{th22_pr_eq1}
\mathbf{P}_{\mathbf{A}, \mathbf{B}} \subseteq \mathbf{P}_{\mathbf{U3}}.
\end{equation}
To prove \eqref{th22_pr_eq1} it suffices to show that for every $f \in \mathbf{P}_{\mathbf{A}, \mathbf{B}}$ and each $(X,d) \in \mathbf{U3}$ the space $(X, f \circ d)$ belongs to $\mathbf{U3}$.

Suppose that, for every $(X,d)\in \mathbf{U3}$, there is $(Y, \rho) \in \mathbf{A}$ such that $(X,d)$ is isometric to a subspace of $(Y, \rho)$.

Let us consider arbitrary $f \in \mathbf{P}_{\mathbf{A}, \mathbf{B}}$ and $(X,d) \in \mathbf{U3}$. Then there exist $(Y, \rho) \in \mathbf{A}$ and a three-point subset $Y_1$ of the set $Y$ such that $(X,d)$ and $(Y_1, \rho_1)$ are isometric, where $\rho_1$ is the restriction of $\rho$ on $Y_1 \times Y_1$.

The membership $f \in \mathbf{P}_{\mathbf{A}, \mathbf{B}}$ implies $(Y, f \circ \rho) \in \mathbf{U}$ because $\mathbf{B} \subseteq \mathbf{U}$. Since every subspace of an ultrametric space is ultrametric, we have $(Y_1, \rho_1) \in \mathbf{U3}$. The isometricity of $(X,d)$ and $(Y_1, \rho_1)$ implies the isometricity of $(X, f \circ d)$ and $(Y_1, f \circ \rho_1)$.

Since every metric space isometric to an ultrametric space is itself ultrametric, the membership \((X, f \circ d) \in \mathbf{U3}\) holds. The proof is completed.
\end{proof}

\begin{example}\label{example_19}
Let $\mathbb{R}^{+}$ denote the set $[0, \infty)$. Following Proposition~2 of \cite{DLPS2008TaiA} we define an ultrametric $d^+ \colon \mathbb{R}^+ \times \mathbb{R}^+ \to [0, \infty)$ as
% \begin{equation}\label{ex19_eq1}
$$
d^+(p,q) := \left\{
\begin{array}{ll}
0 & \quad \hbox{if}\quad p = q, \\
\max \{p,q\} & \quad \hbox{otherwise}.
\end{array}
\right.
$$
% \end{equation}
Then $(\mathbb{R}^+,d^+)$ belongs $\mathbf{P}_{\mathbf{NUDU}}$ but there are no equilateral triangles in $(\mathbb{R}^+, d^+)$.

Let $\mathbb{R}^{+2}_{0}$ be a subset of $\mathbb{R}^{+} \times \mathbb{R}^{+}$ such that, for every $(s,t) \in \mathbb{R}^+ \times \mathbb{R}^+$,
$$
\left( (s,t)\in \mathbb{R}^{+2}_{0}\right) \Leftrightarrow \left( \min \{ s,t\} = 0 \right).
$$
Write
\begin{equation}\label{e16}
d^+_2 \left( (s_1, t_1), (s_2, t_2)\right)
:= \left\{
\begin{array}{ll}
0 & \quad \hbox{if}\quad (s_1, t_1)= (s_2, t_2), \\
\max \{s_1, t_1, s_2, t_2\} & \quad \hbox{otherwise}.
\end{array}
\right.
\end{equation}
Then $(\mathbb{R}^{+2}_0, d^+_2)$ belongs to $\mathbf{P}_{\mathbf{NUDU}}$ and each $(X,d) \in \mathbf{U3}$ is isometrically embeddable in $(\mathbb{R}^{+2}_0, d^+_2)$. Indeed, let \((X, d)\) belong to \(\mathbf{U3}\) and let
\begin{equation}\label{e17}
D(X) := \{d(x, y) \colon x, y \in X \text{ and } x \neq y\}.
\end{equation}
If the set \(D(X)\) contains exactly two numbers \(d_1\) and \(d_2\), then \(d_1 \neq d_2\) and considering in \((\mathbb{R}^{+2}_0, d^+_2)\) the triangle \(\{(0, 0), (0, d_1), (0, d_2)\}\) we see that this triangle and \((X, d)\) are isometric. For the case when \(D(X)\) is a singleton, \(D(X) = \{d_0\}\), it suffices to consider the equilateral triangle
\[
\{(0, 0), (0, d_0), (d_0, 0)\}.
\]
\end{example}

Now using Theorem~\ref{th22} and Example~\ref{example_19} we obtain.

\begin{corollary}\label{cor23}
The inclusion
$$
\mathbf{P}_{ \mathbf{U}} \supseteq \mathbf{P}_{\mathbf{NUDU}}
$$
holds.
\end{corollary}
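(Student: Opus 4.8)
The plan is to apply Theorem~\ref{th22} with $\mathbf{A} = \mathbf{B} = \mathbf{NUDU}$, so that the desired inclusion $\mathbf{P}_{\mathbf{U}} \supseteq \mathbf{P}_{\mathbf{NUDU}}$ becomes exactly the conclusion \eqref{th22_eq1}. The only hypothesis of Theorem~\ref{th22} that needs checking is that every $(X,d) \in \mathbf{U3}$ embeds isometrically into some member of $\mathbf{NUDU}$. This is where Example~\ref{example_19} does the work: the space $(\mathbb{R}^{+2}_0, d^+_2)$ lies in $\mathbf{P}_{\mathbf{NUDU}}$ — wait, more precisely, $(\mathbb{R}^{+2}_0, d^+_2) \in \mathbf{NUDU}$ — and the example shows every three-point ultrametric space is isometrically embeddable into it, by exhibiting the embedding explicitly in the two cases $|D(X)| = 1$ and $|D(X)| = 2$.

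So the proof is essentially three sentences. First I would invoke Example~\ref{example_19} to record that $(\mathbb{R}^{+2}_0, d^+_2)$ belongs to $\mathbf{NUDU}$ and that each $(X,d) \in \mathbf{U3}$ is isometric to a subspace of it. Second, I would take $\mathbf{A} = \mathbf{B} = \mathbf{NUDU}$ in Theorem~\ref{th22}; the hypothesis ``for every $(X,d)\in\mathbf{U3}$ there is $(Y,\rho)\in\mathbf{A}$ with $(X,d)$ isometric to a subspace of $(Y,\rho)$'' is then satisfied with $(Y,\rho) = (\mathbb{R}^{+2}_0, d^+_2)$ for every such $(X,d)$. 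Third, Theorem~\ref{th22} yields $\mathbf{P}_{\mathbf{NUDU}} = \mathbf{P}_{\mathbf{NUDU},\mathbf{NUDU}} \subseteq \mathbf{P}_{\mathbf{U}}$, which is the claim.

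The only point requiring a moment's care is that $\mathbf{NUDU}$ is indeed a class of ultrametric spaces (so Theorem~\ref{th22} applies), which is immediate from the definition of $\mathbf{NUDU}$, and that $\mathbf{NUDU} \subseteq \mathbf{U}$, used implicitly inside the proof of Theorem~\ref{th22}. There is no real obstacle here: all the substance — constructing the ambient not-uniformly-discrete space and verifying the two embedding cases — has already been carried out in Example~\ref{example_19}, and Theorem~\ref{th22} packages the reduction to $\mathbf{U3}$ via Corollary~\ref{cor_20}. I would simply write:

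\begin{proof}
By Example~\ref{example_19} the space $(\mathbb{R}^{+2}_0, d^+_2)$ belongs to $\mathbf{NUDU}$ and every $(X,d) \in \mathbf{U3}$ is isometric to a subspace of $(\mathbb{R}^{+2}_0, d^+_2)$. Hence the hypothesis of Theorem~\ref{th22} holds with $\mathbf{A} = \mathbf{B} = \mathbf{NUDU}$, and \eqref{th22_eq1} gives $\mathbf{P}_{\mathbf{NUDU}} \subseteq \mathbf{P}_{\mathbf{U}}$.
\end{proof}
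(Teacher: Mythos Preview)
Your proof is correct and follows exactly the paper's approach: the paper simply states that the corollary follows from Theorem~\ref{th22} and Example~\ref{example_19}, and your writeup spells out that this means taking $\mathbf{A} = \mathbf{B} = \mathbf{NUDU}$ and using the embedding of every $(X,d)\in\mathbf{U3}$ into $(\mathbb{R}^{+2}_0, d^+_2)\in\mathbf{NUDU}$ established there. You even caught the same typo in Example~\ref{example_19} (``belongs $\mathbf{P}_{\mathbf{NUDU}}$'' should read ``belongs to $\mathbf{NUDU}$'').
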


\begin{remark}\label{rem24}
Example~\ref{example_19} shows that the space \((\mathbb{R}^{+2}_0, d^+_2)\) is universal for the class \(\mathbf{U3}\) in the sense that, for every \((X, d) \in \mathbf{U3}\), there is an isometric embedding \((X, d) \to (\mathbb{R}^{+2}_0, d^+_2)\). It should be noted that, for every \(R \subset (0, \infty)\), there exists a \(R\)-Urysohn universal ultrametric space with the \(R\)-valued ultrametric. Ishiki~\cite{YS2023, YS2024} seems to success in defining of such spaces for uncountable \(R\). It was noted by reviewer of the paper that \(R\)-Urysohn universal ultrametric space belongs to \(\mathbf{NUDU}\) iff the equality \(\inf R = 0\) holds. Some constructions of universal ultrametric spaces of Urysohn type can be found in \cite{Bog2000VMU, GAO2011TAP, W2021TA}.
\end{remark}

The next example shows that every \((X, d) \in \mathbf{U3}\) can be isometrically embedded into some space \((Y, e) \in \mathbf{TBU} \setminus \mathbf{CU}\).

\begin{example}\label{ex23}
Let \((r_n)_{n \in N}\) be a strictly decreasing sequence of positive real numbers with
\begin{equation}\label{ex23:e1}
\lim_{n \to \infty} r_n = 0,
\end{equation}
and let \(Y = Y(r_n)_{n \in N}\) be a subset of \(\mathbb{R}_0^{+2}\) such that, for every \((s, t) \in \mathbb{R}_0^{+2}\), \((s, t)\) belongs to \(Y\) iff the equality
\begin{equation}\label{ex23:e2}
\max\{s, t\} = r_n
\end{equation}
holds for some \(n \in \mathbb{N}\). Write \(e\) for the restriction of the ultrametric \(d_2^{+}\) on the set \(Y\). Then, using~\eqref{ex23:e1} and Proposition~\ref{pr_7}, we obtain
\begin{equation}\label{ex23:e3}
(Y, e) \in \mathbf{TBU}.
\end{equation}
Limit relation~\eqref{ex23:e1} and equality~\eqref{e16} imply
\[
\lim_{n \to \infty} d_2^{+} ((0, 0), (0, r_n)) = 0.
\]
Moreover, it follows from~\eqref{ex23:e2} that \((0, 0) \notin Y\). Consequently, the ultrametric space \((Y, e)\) is not compact by Proposition~\ref{pr_BW}. Thus, \((Y, e) \in \mathbf{TBU} \setminus \mathbf{CU}\) holds.

Let \((X, d)\) belong to \(\mathbf{U3}\), and let \(D(X)\) be defined by formula~\eqref{e17}. Then there is a strictly increasing sequence \((r_n)_{n \in N}\) of positive reals such that \eqref{ex23:e2} holds and \(D(X)\) is a subset of the range of the sequence \((r_n)_{n \in N}\).

If the set \(D(X)\) contains exactly two members \(r_{n_1}\) and \(r_{n_2}\), then \(n_1 \neq n_2\) and \((X, d)\) is isometric to the triangle \(\{(0, r_{n_1}), (0, r_{n_2}), (0, r_{n_3})\}\) in \((Y, e)\) whenever \(n_3 > \max\{n_1, n_2\}\).

For the case when \(D(X)\) is a singleton, \(D(X) = \{r_{n_0}\}\), it suffices to consider in \((Y, e)\) an equilateral triangle \(\{(0, r_{n_0}), (r_{n_0}, 0), (0, r_{n_1})\}\) whenever \(n_1 > n_0\).
\end{example}

Now using Example~\ref{ex23} and Theorem~\ref{th22} we obtain

\begin{corollary}\label{c26}
If \(\mathbf{A} = \mathbf{TBU} \setminus \mathbf{CU}\) and \(\mathbf{B} = \mathbf{TBU}\), then the inclusion
\[
\mathbf{P}_{\mathbf{U}} \supseteq \mathbf{P}_{\mathbf{A}, \mathbf{B}}
\]
holds.
\end{corollary}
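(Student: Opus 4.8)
The plan is to deduce this inclusion from Theorem~\ref{th22} with the particular choice \(\mathbf{A} = \mathbf{TBU} \setminus \mathbf{CU}\) and \(\mathbf{B} = \mathbf{TBU}\). Theorem~\ref{th22} asserts that, for two classes \(\mathbf{A}\) and \(\mathbf{B}\) of ultrametric spaces, the inclusion \(\mathbf{P}_{\mathbf{A},\mathbf{B}} \subseteq \mathbf{P}_{\mathbf{U}}\) holds provided every \((X,d) \in \mathbf{U3}\) embeds isometrically as a subspace into some member of \(\mathbf{A}\). Since the desired conclusion \(\mathbf{P}_{\mathbf{U}} \supseteq \mathbf{P}_{\mathbf{A},\mathbf{B}}\) is literally \(\mathbf{P}_{\mathbf{A},\mathbf{B}} \subseteq \mathbf{P}_{\mathbf{U}}\), it suffices to check that the hypotheses of Theorem~\ref{th22} are met for this pair \((\mathbf{A},\mathbf{B})\).

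First I would observe that both \(\mathbf{A} = \mathbf{TBU} \setminus \mathbf{CU}\) and \(\mathbf{B} = \mathbf{TBU}\) are indeed classes of ultrametric spaces: every totally bounded ultrametric space is an ultrametric space, and removing the compact ones from \(\mathbf{TBU}\) does not affect this. Next I would invoke Example~\ref{ex23}, which exhibits, for an arbitrary \((X,d) \in \mathbf{U3}\), a space \((Y,e) \in \mathbf{TBU} \setminus \mathbf{CU}\) (built inside \(\mathbb{R}^{+2}_0\) from a suitable monotone null sequence \((r_n)\)) together with an isometric embedding of \((X,d)\) into \((Y,e)\); the cases \(|D(X)| = 2\) and \(|D(X)| = 1\) are handled there by choosing, respectively, a two-level and an equilateral triangle among the points \((0,r_n)\) and \((r_n,0)\). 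Thus the embedding hypothesis of Theorem~\ref{th22} holds with \(\mathbf{A} = \mathbf{TBU}\setminus\mathbf{CU}\), and the theorem gives \(\mathbf{P}_{\mathbf{A},\mathbf{B}} \subseteq \mathbf{P}_{\mathbf{U}}\), which is the claim.

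I do not expect any genuine obstacle here: the substantive work—the construction of \((Y,e)\), the verification that \((Y,e)\) is totally bounded via Proposition~\ref{pr_7} and not compact via Proposition~\ref{pr_BW}, and the explicit isometric embeddings of the three-point spaces—has already been carried out in Example~\ref{ex23}, while the reduction to embeddings of \(\mathbf{U3}\)-spaces is exactly the content of Theorem~\ref{th22} (itself resting on the equality \(\mathbf{P}_{\mathbf{U}} = \mathbf{P}_{\mathbf{U3}}\) of Corollary~\ref{cor_20}). The only point requiring a word of care is the trivial remark that \(\mathbf{A}\) and \(\mathbf{B}\) consist of ultrametric spaces, so that Theorem~\ref{th22} is applicable. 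Hence the proof amounts to assembling these two ingredients.
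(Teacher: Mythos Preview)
Your proposal is correct and follows exactly the paper's own argument: the corollary is obtained by applying Theorem~\ref{th22} with \(\mathbf{A}=\mathbf{TBU}\setminus\mathbf{CU}\) and \(\mathbf{B}=\mathbf{TBU}\), using Example~\ref{ex23} to supply the required isometric embeddings of three-point ultrametric spaces into members of \(\mathbf{A}\). There is nothing to add.
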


Let us denote by \(\mathbf{P}^0\) the set of all functions \(f \colon [0, \infty) \to [0, \infty)\) satisfying the limit relation
\begin{equation}\label{th18_pr_eq1}
\lim_{x \to 0+} f(x) = f(0),
\end{equation}
i.e., \(f \colon [0, \infty) \to [0, \infty)\) belongs to \(\mathbf{P}^0\) iff the function is continuous at the point \(0\).

The following lemma is equivalent to Lemma~2.2 of paper \cite{Ish2021AEaEaaIoU}.

\begin{lemma}\label{l26}
The equality
\begin{equation}\label{l26:e1}
\mathbf{PT} = \mathbf{P}^0 \cap \mathbf{P}_{\mathbf{U}}
\end{equation}
holds.
\end{lemma}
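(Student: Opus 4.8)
The plan is to prove the two inclusions $\mathbf{PT} \subseteq \mathbf{P}^0 \cap \mathbf{P}_{\mathbf{U}}$ and $\mathbf{P}^0 \cap \mathbf{P}_{\mathbf{U}} \subseteq \mathbf{PT}$ separately. For the first inclusion, let $f \in \mathbf{PT}$. By Definition~\ref{def11} we immediately have $f \in \mathbf{P}_{\mathbf{U}}$, so it remains to show $f \in \mathbf{P}^0$, i.e. that $\lim_{x \to 0+} f(x) = f(0) = 0$ (note $f(0)=0$ since $f$ is ultrametric preserving, hence amenable by Theorem~\ref{t2.4}). Suppose $f \notin \mathbf{P}^0$. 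Since $f$ is increasing, the right limit $L := \lim_{x\to 0+} f(x)$ exists and $L > 0$. I would build a concrete ultrametric space on which $d$ and $f \circ d$ differ topologically: take $X = \{0\} \cup \{x_n : n \in \mathbb{N}\}$ with a strictly decreasing sequence $(c_n)$ of positive reals tending to $0$, and define $d(x_n, x_m) = \max\{c_n, c_m\}$ for $n \neq m$, $d(x_n, 0) = c_n$. This is the standard ultrametric in which $x_n \to 0$ in the $d$-topology. Then $(f\circ d)(x_n, 0) = f(c_n) \to L > 0$, so $x_n \not\to 0$ in the $(f\circ d)$-topology, contradicting $f \in \mathbf{PT}$. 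Hence $f \in \mathbf{P}^0$.

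For the reverse inclusion, let $f \in \mathbf{P}^0 \cap \mathbf{P}_{\mathbf{U}}$ and let $(X,d)$ be an arbitrary ultrametric space; I must show that $d$ and $f \circ d$ (which is an ultrametric on $X$ by Theorem~\ref{t2.4}) induce the same topology. Since both are metric topologies, it suffices to show they have the same convergent sequences, and by a standard argument it is enough to show that $d(x_n, p) \to 0$ if and only if $(f \circ d)(x_n, p) \to 0$ for every sequence $(x_n)$ and point $p$. The direction $d(x_n,p)\to 0 \implies f(d(x_n,p)) \to 0$ is exactly continuity of $f$ at $0$, i.e. $f \in \mathbf{P}^0$, together with $f(0)=0$. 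For the converse direction, suppose $f(d(x_n,p)) \to 0$ but $d(x_n,p) \not\to 0$; then there is $\varepsilon > 0$ and a subsequence with $d(x_{n_k}, p) \geq \varepsilon$, and since $f$ is increasing $f(d(x_{n_k},p)) \geq f(\varepsilon) > 0$ (using amenability), contradicting $f(d(x_n,p)) \to 0$. This completes both inclusions.

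The main obstacle, such as it is, is being careful about what "same topology" buys us: one must justify reducing the equality of two metrizable topologies to agreement of convergent sequences (metric topologies are sequential, and a bijective map continuous in both directions on sequences is a homeomorphism here since the underlying set is fixed and the identity map is the candidate homeomorphism), and one must make sure the inequality $f(\varepsilon) > 0$ is available, which it is precisely because ultrametric preserving functions are amenable. A secondary small point is handling $f(0) = 0$ uniformly; this again follows from Theorem~\ref{t2.4}. I would also remark, as the statement already notes, that this lemma is essentially Lemma~2.2 of \cite{Ish2021AEaEaaIoU}, so the proof is expected to be short and the counterexample construction in the first inclusion is the only part requiring an explicit space.
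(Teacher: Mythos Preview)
Your proof is correct. The paper does not actually give its own proof of this lemma; it simply states that the lemma is equivalent to Lemma~2.2 of \cite{Ish2021AEaEaaIoU} and moves on. You have supplied a complete self-contained argument, which is more than the paper does here: the two-inclusion strategy, the explicit counterexample space $X=\{0\}\cup\{x_n\}$ with $d(x_n,x_m)=\max\{c_n,c_m\}$ for the forward direction, and the sequential characterization of metric topologies together with monotonicity and amenability for the reverse direction are all sound. Your remarks on the ``obstacles'' (reducing equality of metrizable topologies to agreement of convergent sequences, and securing $f(\varepsilon)>0$ via amenability) correctly identify the only places where care is needed, and you handle them properly.
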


Lemma~\ref{l26} also admits the following reformulation.

\begin{lemma}\label{lem_27}
A function $f \in \mathbf{P}_{\mathbf{U}}$ belongs to the set $\mathbf{P}_{\mathbf{U}} \setminus \mathbf{PT}$ iff there is $a\in (0, \infty)$ such that
$$
f(s) \geqslant a
$$
for every $s \in (0, \infty)$.
\end{lemma}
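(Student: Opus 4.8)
The plan is to derive this as a direct contrapositive-style reformulation of Lemma~\ref{l26}, which asserts $\mathbf{PT} = \mathbf{P}^0 \cap \mathbf{P}_{\mathbf{U}}$. Fix $f \in \mathbf{P}_{\mathbf{U}}$; by Theorem~\ref{t2.4} this means $f$ is increasing and amenable. We want to show: $f \in \mathbf{P}_{\mathbf{U}} \setminus \mathbf{PT}$ iff there is $a \in (0,\infty)$ with $f(s) \geqslant a$ for all $s \in (0,\infty)$. By Lemma~\ref{l26}, $f \in \mathbf{P}_{\mathbf{U}} \setminus \mathbf{PT}$ is equivalent to $f \notin \mathbf{P}^0$, i.e. to $\lim_{x \to 0+} f(x) \neq f(0) = 0$. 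So the whole lemma reduces to the elementary real-variable claim: for an increasing amenable $f$, the limit $\lim_{x\to 0+} f(x)$ fails to be $0$ if and only if $f$ is bounded below by a positive constant on $(0,\infty)$.

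For the real-variable claim, first I would note that since $f$ is increasing, the one-sided limit $L := \lim_{x \to 0+} f(x) = \inf_{x > 0} f(x)$ exists in $[0,\infty)$ and equals the infimum of $f$ over $(0,\infty)$; moreover $L \geqslant f(0) = 0$ automatically, and monotonicity gives $f(s) \geqslant L$ for every $s > 0$. Now if $f \notin \mathbf{P}^0$ then $L \neq 0$, so $L > 0$; taking $a := L$ gives $f(s) \geqslant a$ for all $s \in (0,\infty)$. Conversely, if there is $a > 0$ with $f(s) \geqslant a$ for all $s > 0$, then $L = \inf_{s>0} f(s) \geqslant a > 0 = f(0)$, so the continuity condition \eqref{th18_pr_eq1} fails and $f \notin \mathbf{P}^0$, hence $f \in \mathbf{P}_{\mathbf{U}} \setminus \mathbf{PT}$ by Lemma~\ref{l26}.

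There is essentially no obstacle here: the content is entirely carried by Lemma~\ref{l26} together with the fact that an increasing function on $[0,\infty)$ has a right-hand limit at $0$ equal to its infimum over $(0,\infty)$. The only point worth stating carefully is why $L \neq 0$ forces $L > 0$ — this is immediate because $L \in [0,\infty)$ — and why the constant $a$ can be taken to be $L$ itself, which follows from monotonicity of $f$. If one wished to avoid invoking Theorem~\ref{t2.4} explicitly one could instead argue from $f$ being amenable alone, but using monotonicity makes the infimum characterization of the limit transparent and keeps the argument short.
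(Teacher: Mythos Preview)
Your proof is correct and follows exactly the paper's approach: the paper's own proof consists of a single sentence stating that the lemma is equivalent to equality~\eqref{l26:e1}, and your argument simply spells out that equivalence using Theorem~\ref{t2.4} and the observation that for an increasing amenable $f$ one has $\lim_{x\to 0+} f(x) = \inf_{s>0} f(s)$.
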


\begin{proof}
This statement is equivalent to equality~\eqref{l26:e1}.
\end{proof}

The next theorem can be considered as the main result of the paper.

\begin{theorem}\label{th18}
The equalities
\begin{equation}\label{th18_eq1}
\mathbf{PT}= \mathbf{P}_{\mathbf{CU}}= \mathbf{P}_{\mathbf{TBU}} =\mathbf{P}_{\mathbf{CU}, \mathbf{TBU}}= \mathbf{P}_{\mathbf{NUDU}}
\end{equation}
hold. A function $f \colon [0, \infty) \to [0, \infty)$ belongs to $\mathbf{PT}$ iff $f$ is amenable, increasing and continuous at the point 0.
\end{theorem}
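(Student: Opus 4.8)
The plan is to establish the chain of equalities in \eqref{th18_eq1} by combining the inclusions already proved (Corollaries~\ref{cor_21}, \ref{cor23}) with the characterization of $\mathbf{PT}$ from Lemma~\ref{l26}, and then deduce the concrete description. First I would observe that the ``$\subseteq \mathbf{P}_{\mathbf{U}}$'' direction for each of the sets $\mathbf{P}_{\mathbf{CU}}$, $\mathbf{P}_{\mathbf{TBU}}$, $\mathbf{P}_{\mathbf{CU},\mathbf{TBU}}$, $\mathbf{P}_{\mathbf{NUDU}}$ is already in hand (from Corollary~\ref{cor_21} and Corollary~\ref{cor23}), and that $\mathbf{P}_{\mathbf{CU},\mathbf{TBU}} \supseteq \mathbf{P}_{\mathbf{CU}}$ by Proposition~\ref{lem_3.1}(ii) since $\mathbf{CU}\subseteq\mathbf{TBU}$; likewise, since each compact ultrametric space is totally bounded, $\mathbf{P}_{\mathbf{TBU}} \subseteq \mathbf{P}_{\mathbf{CU},\mathbf{TBU}}$ is immediate from $f\circ d$ being totally bounded whenever $d$ is. So the real content is to show that every $f$ in any of these five sets is amenable, increasing, and continuous at $0$, and conversely that such an $f$ lies in all five.

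For the forward direction, let $f$ belong to one of the preserving classes. By the inclusions above, $f\in\mathbf{P}_{\mathbf{U}}$, so by Theorem~\ref{t2.4} $f$ is increasing and amenable; it remains to show $f$ is continuous at $0$, equivalently (by Lemma~\ref{l26}, or directly by Lemma~\ref{lem_27}) that $f$ does not stay bounded away from $0$ on $(0,\infty)$. Suppose for contradiction there is $a>0$ with $f(s)\geqslant a$ for all $s>0$. Then I would feed $f$ a space that is compact (hence in all the source classes, after checking it is also not uniformly discrete): take $(X,d)$ to be, say, $\{0\}\cup\{r_n : n\in\mathbb{N}\}$ with $r_n\downarrow 0$ and the ultrametric $d^+$ from Example~\ref{example_19}, or simply a convergent sequence together with its limit. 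This space is compact, totally bounded, and not uniformly discrete, so it lies in $\mathbf{CU}$, $\mathbf{TBU}$, and $\mathbf{NUDU}$. But $(X, f\circ d)$ has $f(d(x_n,p))\geqslant a$ for all $n$ with $x_n\ne p$, so no subsequence converges to $p$ in $f\circ d$, and in fact $(X,f\circ d)$ is uniformly discrete; by Proposition~\ref{pr_BW} it is not compact, and being infinite and uniformly discrete it is neither totally bounded (Proposition~\ref{pr_7}) nor not-uniformly-discrete. This contradicts membership of $f$ in any of the four source classes $\mathbf{P}_{\mathbf{CU}}$, $\mathbf{P}_{\mathbf{TBU}}$, $\mathbf{P}_{\mathbf{CU},\mathbf{TBU}}$, $\mathbf{P}_{\mathbf{NUDU}}$. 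Hence $f$ is continuous at $0$, and therefore $f\in\mathbf{PT}$ by Lemma~\ref{l26}.

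For the converse, suppose $f$ is amenable, increasing, and continuous at $0$; by Lemma~\ref{l26} this is exactly the statement $f\in\mathbf{PT}=\mathbf{P}^0\cap\mathbf{P}_{\mathbf{U}}$. I need $f$ to lie in all five sets. That $f\in\mathbf{PT}$ is given. For $\mathbf{P}_{\mathbf{TBU}}$: if $(X,d)\in\mathbf{TBU}$, then $f\circ d$ is an ultrametric (Theorem~\ref{t2.4}), and I must check $(X,f\circ d)$ is totally bounded; using Proposition~\ref{pr_7}, take any infinite sequence in $X$, extract a $d$-Cauchy subsequence $(x_{n_k})$, and since $f$ is increasing and continuous at $0$, $d(x_{n_k},x_{n_m})\to 0$ forces $f(d(x_{n_k},x_{n_m}))\to 0$, so the subsequence is $f\circ d$-Cauchy. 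Hence $\mathbf{P}_{\mathbf{TBU}}\supseteq$ (these $f$). The same Cauchy-subsequence argument, now with convergence replacing Cauchy and Proposition~\ref{pr_BW}, gives $\mathbf{P}_{\mathbf{CU}}\supseteq$ (these $f$): a $d$-convergent subsequence to $p\in X$ is $f\circ d$-convergent to $p$ because $d(x_{n_k},p)\to0 \Rightarrow f(d(x_{n_k},p))\to 0$. Since $\mathbf{P}_{\mathbf{CU}},\mathbf{P}_{\mathbf{TBU}}\subseteq\mathbf{P}_{\mathbf{CU},\mathbf{TBU}}$, the third set is covered. Finally for $\mathbf{P}_{\mathbf{NUDU}}$: if $(X,d)\in\mathbf{NUDU}$, Proposition~\ref{prop_11} gives sequences $x_n\ne y_n$ with $d(x_n,y_n)\to0$; then $f(d(x_n,y_n))\to f(0)=0$ by continuity at $0$ and amenability, and $f(d(x_n,y_n))>0$ since $d(x_n,y_n)>0$ and $f$ is amenable, so $(X,f\circ d)\in\mathbf{NUDU}$ again by Proposition~\ref{prop_11} (and $(X,f\circ d)\in\mathbf{U}$ by Theorem~\ref{t2.4}). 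This closes the circle: all five sets coincide with $\mathbf{PT}$, and the explicit description ``amenable, increasing, continuous at $0$'' is precisely the unpacking of Lemma~\ref{l26} via Theorem~\ref{t2.4}.

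The main obstacle I anticipate is purely organizational rather than deep: making sure the single witness space used in the forward direction simultaneously belongs to $\mathbf{CU}$, $\mathbf{TBU}$, and $\mathbf{NUDU}$ (a convergent non-eventually-constant sequence together with its limit does the job for all three), and keeping straight which inclusions are ``trivial from $\mathbf{A}_1\subseteq\mathbf{A}_2$ and monotonicity of $\mathbf{P}$'' versus which genuinely use the continuity-at-$0$ hypothesis through the Cauchy/convergent-subsequence translation. There is no hard estimate here — every step reduces to the two sequential criteria (Propositions~\ref{pr_BW} and~\ref{pr_7}), Proposition~\ref{prop_11}, Theorem~\ref{t2.4}, and Lemma~\ref{l26}.
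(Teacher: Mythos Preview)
Your proof is correct and follows essentially the same strategy as the paper: both use Corollaries~\ref{cor_21} and~\ref{cor23} to place each preserving class inside $\mathbf{P}_{\mathbf{U}}$, then invoke Lemma~\ref{lem_27} on a suitable infinite compact (hence totally bounded and not uniformly discrete) ultrametric space to force continuity at $0$, and finally check that continuity at $0$ pushes Cauchy/convergent sequences and the $\mathbf{NUDU}$ witness through $f$. The only differences are organizational: you economize by using a single explicit witness space (a convergent sequence with its limit) for all four forward contradictions, whereas the paper treats the four equalities in parallel with a generic ``infinite compact ultrametric space''; and for the reverse inclusions you argue directly via the sequential criteria (Propositions~\ref{pr_BW}, \ref{pr_7}), whereas the paper appeals to Definition~\ref{def11} (same topology $\Rightarrow$ compactness preserved) for $\mathbf{P}_{\mathbf{CU}}$ and to uniform continuity of the identity map for $\mathbf{P}_{\mathbf{TBU}}$. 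Your version is slightly more self-contained; the paper's is slightly more conceptual --- but the underlying argument is the same.
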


\begin{proof}
The second statement follows from Definition~\ref{def11} and Lemma~\ref{l26}.

Let us prove the first statement. To prove \eqref{th18_eq1} it is enough to check the following equalities
\begin{equation}\label{th18_pr_eq14_1}
\mathbf{P}_{\mathbf{CU}} = \mathbf{PT},
\end{equation}
\begin{equation}\label{th18_pr_eq15}
\mathbf{P}_{\mathbf{CU}, \mathbf{TBU}} = \mathbf{PT},
\end{equation}
\begin{equation}\label{th18_pr_eq14}
\mathbf{P}_{\mathbf{TBU}} = \mathbf{PT},
\end{equation}
and
\begin{equation}\label{th18_pr_eq16}
\mathbf{P}_{\mathbf{NUDU}} = \mathbf{PT}.
\end{equation}

Let us prove equality \eqref{th18_pr_eq14_1}. First of all we note that
\begin{equation}\label{th18_pr_eq16_1}
\mathbf{P}_{\mathbf{CU}} \subseteq \mathbf{P}_{\mathbf{U}},
\end{equation}
holds by Corollary~\ref{cor_21}. Consequently \eqref{th18_pr_eq14_1} holds iff we have
\begin{equation}\label{th18_pr_eq_e1}
\mathbf{PT} \subseteq \mathbf{P}_{\mathbf{CU}}
\end{equation}
and
\begin{equation}\label{th18_pr_eq_e2}
\mathbf{P}_{\mathbf{U}} \setminus \mathbf{PT} \subseteq \mathbf{P}_{\mathbf{U}} \setminus\mathbf{P}_{\mathbf{CU}}.
\end{equation}
To prove \eqref{th18_pr_eq_e1} it suffices to note that, for every $f \in \mathbf{PT} $ and each compact $(X,d) \in \mathbf{U}$, the space $(X, f \circ d)$ is also compact by Definition~\ref{def11}.

Let us prove \eqref{th18_pr_eq_e2}. It suffices to show that
\begin{equation}\label{th18_pr_eq16_2}
f \in \mathbf{P}_{\mathbf{U}} \setminus \mathbf{P}_{\mathbf{CU}}
\end{equation}
holds whenever
\begin{equation}\label{th18_pr_eq16_3}
f \in \mathbf{P}_{\mathbf{U}} \setminus \mathbf{PT}.
\end{equation}
Let \eqref{th18_pr_eq16_3} hold. To prove \eqref{th18_pr_eq16_2} it suffices to show that there is $(X,d) \in \mathbf{CU}$ such that
\begin{equation}\label{th18_pr_eq16_4}
(X, f \circ d) \notin \mathbf{CU}.
\end{equation}
Let us consider a compact ultrametric space $(X,d)$ of infinite cardinality. Then, by Lemma~\ref{lem_27}, for all distinct $x, y \in X$, relation \eqref{th18_pr_eq16_3} implies the inequality
$$
f(d(x,y)) \geqslant a > 0
$$
with some $a\in (0, \infty)$. Consequently we have \eqref{th18_pr_eq16_4} by Proposition~\ref{pr_7}. Inclusion \eqref{th18_pr_eq_e2} follows. The proof of \eqref{th18_pr_eq14_1} is completed.

Let us prove \eqref{th18_pr_eq15}. By Corollary~\ref{cor_21} we have the inclusion
$$
\mathbf{P}_{\mathbf{U}} \supseteq \mathbf{P}_{\mathbf{CU},\mathbf{TBU}}.
$$
Consequently \eqref{th18_pr_eq15} holds iff the inclusion
\begin{equation}\label{th18_pr_eq17_1}
\mathbf{PT} \subseteq \mathbf{P}_{\mathbf{CU}, \mathbf{TBU}}
\end{equation}
and
\begin{equation}\label{th18_pr_eq17_2}
\mathbf{P}_{\mathbf{U}} \setminus \mathbf{PT} \subseteq \mathbf{P}_{\mathbf{U}} \setminus \mathbf{P}_{\mathbf{CU}, \mathbf{TBU}}
\end{equation}
are valid. Let us prove \eqref{th18_pr_eq17_1}. It was noted in the proof of \eqref{th18_pr_eq14_1} that for every $f \in \mathbf{PT}$ and each $(X,d) \in \mathbf{CU}$ we have
\begin{equation}\label{th18_pr_eq17_3}
(X, f \circ d) \in \mathbf{CU}.
\end{equation}
Now \eqref{th18_pr_eq17_3} and \eqref{s2_eq3} imply
$$
(X,f \circ d) \in \mathbf{TBU}.
$$
Consequently every $f \in \mathbf{PT}$ belongs $\mathbf{P}_{\mathbf{CU}, \mathbf{TBU}}$. Inclusion \eqref{th18_pr_eq17_1} follows.

Let us prove \eqref{th18_pr_eq17_2}. It suffices to show that
\begin{equation}\label{th18_pr_eq17_4}
f \in \mathbf{P}_{\mathbf{U}} \setminus \mathbf{P}_{\mathbf{CU}, \mathbf{TBU}}
\end{equation}
whenever \eqref{th18_pr_eq16_3} holds.

Let \eqref{th18_pr_eq16_3} hold. To prove \eqref{th18_pr_eq17_4} it is enough to find $(X,d) \in \mathbf{CU}$ such that
\begin{equation}\label{th18_pr_eq17_5}
(X, f \circ d) \notin \mathbf{TBU}.
\end{equation}
Let us consider a compact ultrametric space of infinite cardinatity. Then, by Lemma~\ref{lem_27}, there is $a \in (0, \infty)$ such that
$$
f(d(x,y)) \geqslant a
$$
for all distinct $x,y \in X$. It implies \eqref{th18_pr_eq17_5} by Proposition~\ref{pr_7}. Inclusion \eqref{th18_pr_eq17_2} follows. The proof of \eqref{th18_pr_eq15} is completed.

Let us prove equality \eqref{th18_pr_eq14}. By Corollary~\ref{cor_21} we have
$$
\mathbf{P}_{\mathbf{TBU}} \subseteq \mathbf{P}_{\mathbf{U}}.
$$
Consequently \eqref{th18_pr_eq14} holds iff
\begin{equation}\label{th18_pr_eq21}
\mathbf{P}_{\mathbf{U}} \setminus \mathbf{PT} \subseteq \mathbf{P}_{\mathbf{U}} \setminus \mathbf{P}_{\mathbf{TBU}}
\end{equation}
and
\begin{equation}\label{th18_pr_eq22}
\mathbf{PT} \subseteq \mathbf{P}_{\mathbf{TBU}}.
\end{equation}
To prove \eqref{th18_pr_eq21} we note that
$
\mathbf{CU} \subseteq \mathbf{TBU}
$
implies
\begin{equation}\label{th18_pr_eq23}
\mathbf{P}_{\mathbf{TBU}} \subseteq \mathbf{P}_{\mathbf{CU}, \mathbf{TBU}}
\end{equation}
by Proposition~\ref{lem_3.1}. Consequently \eqref{th18_pr_eq21} follows from \eqref{th18_pr_eq23} and \eqref{th18_pr_eq17_2}.

Let us prove \eqref{th18_pr_eq22}. Inclusion \eqref{th18_pr_eq22} holds iff, for every $f \in \mathbf{PT}$ and each $(X,d) \in \mathbf{TBU}$, the space $(X, f \circ d)$ is totally bounded.

Let us consider arbitrary $f \in \mathbf{PT}$ and $(X,d) \in \mathbf{TBU}$. Equality \eqref{l26:e1} and limit relation \eqref{th18_pr_eq1} imply that the identical mapping $(X,d) \stackrel{\mathrm{Id}}{\to} (X, f \circ d)$, $\mathrm{Id} (x)= x$ for every $x \in X$, is uniformly continuous and, consequently, this mapping preserves the total boundedness. (See, for example, Theorem 9.2.1 in \cite{Sea2007}). It implies \eqref{th18_pr_eq22}. Equality \eqref{th18_pr_eq14} follows.

Let us prove equality \eqref{th18_pr_eq16}. Using \eqref{l26:e1} we see that equality \eqref{th18_pr_eq16} holds iff
\begin{equation}\label{eq_e1}
\mathbf{P}_{\mathbf{NUDU}} \subseteq \mathbf{PT}
\end{equation}
and
\begin{equation}\label{eq_e2}
\mathbf{P}_{\mathbf{NUDU}} \supseteq \mathbf{P}^{\mathbf{0}} \cap \mathbf{P}_{\mathbf{U}}.
\end{equation}
Let us prove \eqref{eq_e1}.

By Corollary~\ref{cor23} we have
$$
\mathbf{P}_{\mathbf{U}} \supseteq \mathbf{P}_{\mathbf{NUDU}}.
$$
Consequently \eqref{eq_e1} is fails if there is $f \in \mathbf{P}_{\mathbf{NUDU}}$ such that
\begin{equation}\label{eq_e5}
f \in \mathbf{P}_{\mathbf{U}} \setminus \mathbf{PT}.
\end{equation}

Let us consider an arbitrary $f \in \mathbf{P}_{\mathbf{NUDU}}$ satisfying \eqref{eq_e5}. Then, by Lemma~\ref{lem_27}, there exists $a \in (0, \infty)$ such that
\begin{equation}\label{th18_pr_eq6_nudu}
f(t) \geqslant a
\end{equation}
for every $t \in (0, \infty)$. Using \eqref{eq_e5} we obtain
$$
f(d(x,y)) \geqslant a
$$
for every $(X,d) \in \mathbf{NUDU}$ and all distinct $(x,y) \in X$. Now Definition~\ref{def_1.5} implies that $(X, f \circ d)$ is an uniformly discrete ultrametric space, contrary to $f \in \mathbf{P}_{\mathbf{NUDU}}$. Inclusion \eqref{eq_e1} follows.

To prove \eqref{eq_e2} we consider an arbitrary $f \in \mathbf{P}^{\mathbf{0}} \cap \mathbf{P}_{\mathbf{U}}$ and arbitrary $(X,d) \in \mathbf{NUDU}$. By Proposition~\ref{prop_11} there are sequences $(x_n)_{n \in \mathbb{N}}$ and $(y_n)_{n \in \mathbb{N}}$ of points of $X$ such that
\begin{equation}\label{eq_e4}
\lim_{n\to \infty} d(x_n, y_n)=0
\end{equation}
and $x_n \neq y_n$ for every $n \in \mathbb{N}$.
The membership $f \in \mathbf{P}^{\mathbf{0}} \cap \mathbf{P}_{\mathbf{U}}$ implies that $f$ is amenable and continuous at $0$. Now using \eqref{eq_e4} we obtain
$$
\lim_{n\to \infty} f(d(x_n, y_n))=0.
$$
Hence $(X, f \circ d)$ belongs to $\mathbf{NUDU}$ by Proposition~\ref{prop_11}. Thus inclusion \eqref{eq_e2} holds. Equality \eqref{th18_pr_eq16} follows.

The proof is completed.
\end{proof}

In addition to classes $\mathbf{CU},$ $ \mathbf{TBU}$ and $ \mathbf{NUDU}$ there are other classes $\mathbf{X}$ of ultrametric spaces satisfying the equation
$$
\mathbf{P}_{\mathbf{X}} = \mathbf{PT}.
$$

\begin{proposition} \label{prop_27}
Let $\mathbf{A}$ be the class of all totally bounded but non-compact ultrametric spaces,
\begin{equation}\label{s3_eq50}
\mathbf{A} =\mathbf{TBU} \setminus \mathbf{CU}.
\end{equation}
Then the equalities
\begin{equation}\label{s3_eq51}
\mathbf{P}_{\mathbf{A}} = \mathbf{PT},
\end{equation}
\begin{equation}\label{s3_eq51_2}
\mathbf{P}_{\mathbf{A}, \mathbf{TBU}} = \mathbf{PT}
\end{equation}
hold.
\end{proposition}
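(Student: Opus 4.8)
The plan is to mimic the proof of Theorem~\ref{th18}, exploiting the chain of inclusions
\[
\mathbf{PT} \subseteq \mathbf{P}_{\mathbf{A}} \subseteq \mathbf{P}_{\mathbf{A}, \mathbf{TBU}} \subseteq \mathbf{P}_{\mathbf{U}},
\]
where the middle inclusion holds by Proposition~\ref{lem_3.1}$(ii)$ (since $\mathbf{A} = \mathbf{TBU}\setminus\mathbf{CU} \subseteq \mathbf{TBU}$) and the last one is Corollary~\ref{c26}. Once this chain is established, it suffices to prove the single reverse inclusion $\mathbf{P}_{\mathbf{A}, \mathbf{TBU}} \subseteq \mathbf{PT}$; that forces all four sets to coincide and yields both \eqref{s3_eq51} and \eqref{s3_eq51_2} simultaneously.

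To get $\mathbf{PT} \subseteq \mathbf{P}_{\mathbf{A}}$ I would take $f \in \mathbf{PT}$ and $(X,d) \in \mathbf{A}$ and verify $(X, f\circ d) \in \mathbf{A}$. Total boundedness of $(X, f\circ d)$ is immediate from the equality $\mathbf{PT} = \mathbf{P}_{\mathbf{TBU}}$ of Theorem~\ref{th18} (equivalently, from uniform continuity of the identity map $(X,d) \to (X, f\circ d)$, just as in the proof of Theorem~\ref{th18}). Non-compactness of $(X, f\circ d)$ follows because, by Definition~\ref{def11}, $d$ and $f\circ d$ induce the same topology on $X$, compactness is a topological invariant, and $(X,d)$ is not compact; hence $(X, f\circ d) \in \mathbf{TBU}\setminus\mathbf{CU} = \mathbf{A}$. (If one prefers to avoid invoking topology preservation directly, one can instead pick a Cauchy non-convergent sequence in $(X,d)$ — which exists since a totally bounded non-compact space is not complete — thin it to pairwise distinct terms, observe via continuity of $f$ at $0$ that it remains Cauchy in $(X, f\circ d)$, and note that amenability and monotonicity of $f$ prevent it from acquiring a limit after composition.)

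For the reverse inclusion $\mathbf{P}_{\mathbf{A}, \mathbf{TBU}} \subseteq \mathbf{PT}$ I would argue by contraposition using Lemma~\ref{lem_27}. Take $f \in \mathbf{P}_{\mathbf{A}, \mathbf{TBU}} \subseteq \mathbf{P}_{\mathbf{U}}$ and suppose $f \notin \mathbf{PT}$, so $f \in \mathbf{P}_{\mathbf{U}}\setminus\mathbf{PT}$; by Lemma~\ref{lem_27} there is $a \in (0,\infty)$ with $f(s) \geqslant a$ for all $s > 0$. Now plug in the explicit space $(Y,e) \in \mathbf{TBU}\setminus\mathbf{CU} = \mathbf{A}$ of Example~\ref{ex23}, which is infinite (it contains the distinct points $(0, r_n)$): for all distinct $u,v \in Y$ one has $f(e(u,v)) \geqslant a$, so $(Y, f\circ e)$ is an infinite uniformly discrete ultrametric space, hence not totally bounded, contradicting $f \in \mathbf{P}_{\mathbf{A}, \mathbf{TBU}}$. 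Therefore $f \in \mathbf{PT}$, which closes the chain and completes the proof.

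The only step with any content is the non-compactness half of $\mathbf{PT} \subseteq \mathbf{P}_{\mathbf{A}}$: one must transfer the failure of compactness from $(X,d)$ to $(X, f\circ d)$, and for this it is essential to use that a strongly ultrametric preserving $f$ leaves the topology unchanged (equivalently, its continuity at $0$ together with amenability prevents a non-convergent sequence from converging after composition with $f$). Everything else is bookkeeping with Proposition~\ref{lem_3.1} and the already-available inputs Theorem~\ref{th18}, Corollary~\ref{c26}, Lemma~\ref{lem_27} and Example~\ref{ex23}.
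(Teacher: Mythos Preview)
Your proposal is correct and follows essentially the same route as the paper: both arguments use Theorem~\ref{th18} (specifically $\mathbf{PT}=\mathbf{P}_{\mathbf{TBU}}$) for total boundedness, Definition~\ref{def11} for the non-compactness half of $\mathbf{PT}\subseteq\mathbf{P}_{\mathbf{A}}$, and Corollary~\ref{c26} together with Lemma~\ref{lem_27} and a space from Example~\ref{ex23} for the reverse direction. Your organisation via the single chain $\mathbf{PT}\subseteq\mathbf{P}_{\mathbf{A}}\subseteq\mathbf{P}_{\mathbf{A},\mathbf{TBU}}\subseteq\mathbf{P}_{\mathbf{U}}$ closed by one contrapositive is slightly more economical than the paper's treatment of the two equalities separately, but the substance is identical.
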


\begin{proof}
Equality \eqref{s3_eq51} holds iff
\begin{equation}\label{s3_eq52}
\mathbf{PT} \subseteq \mathbf{P}_{\mathbf{A}}
\end{equation}
and
\begin{equation}\label{s3_eq53}
\mathbf{P}_{\mathbf{A}} \subseteq \mathbf{PT}.
\end{equation}
Before proving these inclusions, we note that Theorem~\ref{th22} and Example~\ref{ex23} imply
\begin{equation}\label{s3_eq54}
\mathbf{P}_{\mathbf{A}} \subseteq \mathbf{P}_{\mathbf{U}}.
\end{equation}

Let us prove \eqref{s3_eq52}. We will do it using Theorem~\ref{th18}. Let us consider arbitrary $f \in \mathbf{PT}$ and $(X,d) \in \mathbf{A}$. To prove \eqref{s3_eq52} if suffices to show that
\begin{equation}\label{s3_eq55}
(X, f \circ d) \in \mathbf{TBU}
\end{equation}
and
\begin{equation}\label{s3_eq56}
(X, f \circ d) \notin \mathbf{CU}.
\end{equation}
Membership \eqref{s3_eq55} follows from the relations
$$
\mathbf{PT} = \mathbf{P}_{\mathbf{TBU}}
$$
and
$$
\mathbf{A} \subseteq \mathbf{TBU}
$$
which are valid by \eqref{th18_eq1} and, respectively, by \eqref{s3_eq50}.

To prove \eqref{s3_eq56}, let us assume the opposite
$$
(X. f \circ d) \in \mathbf{CU}.
$$
Equality $\mathbf{PT}=\mathbf{P}_{\mathbf{CU}}$, inclusion \eqref{s3_eq54} and Definition~\ref{def11} show that
$$
(X,d) \in \mathbf{CU},
$$
contrary to $(X,d) \in \mathbf{A}$. Consequently \eqref{s3_eq56} holds. Inclusion \eqref{s3_eq52} follows.

Let us prove \eqref{s3_eq53}. Suppose contrary that there is $f\in \mathbf{P}_{\mathbf{A}} \setminus \mathbf{PT}$. Then using \eqref{s3_eq54} we obtain
$$
f\in \mathbf{P}_{\mathbf{U}} \setminus \mathbf{PT}.
$$
By Lemma~\ref{lem_27}, there is $a \in (0, \infty)$ such that
$$
f(s) \geqslant a
$$
for every $s \in (0, \infty)$. Consequently
$$
f(d(x,y)) \geqslant a
$$
holds for every $(X,d) \in \mathbf{A}$ and all different $x,y \in X$. It implies $(X, f \circ d) \notin \mathbf{A}$ whenever $(X,d) \in \mathbf{A}$, which contradicts $f \in \mathbf{P}_{\mathbf{A}}$. Inclusion \eqref{s3_eq53} follows. The proof of equality \eqref{s3_eq51} is completed.

Let us prove equality \eqref{s3_eq51_2}. By Proposition~\ref{lem_3.1} we have
$$
\mathbf{P}_{\mathbf{A}} \subseteq \mathbf{P}_{\mathbf{A},\mathbf{TBU}}.
$$
The last inclusion and \eqref{s3_eq51} imply
$$
\mathbf{P T} \subseteq \mathbf{P}_{\mathbf{A},\mathbf{TBU}}.
$$
So to prove \eqref{s3_eq51_2} it suffices to show that
\begin{equation}\label{s3_eq57}
\mathbf{P}_{\mathbf{A},\mathbf{TBU}} \subseteq \mathbf{P T}.
\end{equation}
Before proving \eqref{s3_eq57} we note that the inclusion
$$
\mathbf{P}_{\mathbf{A},\mathbf{TBU}} \subseteq \mathbf{P}_{\mathbf{U}}
$$
holds by Corollary~\ref{c26}. Consequently, if \eqref{s3_eq57} not valid, then there is a function
\begin{equation}\label{s3_eq58}
f \in \mathbf{P}_{\mathbf{A},\mathbf{TBU}},
\end{equation}
such that $f \in \mathbf{P}_{\mathbf{U}}$ and $f \notin \mathbf{PT}$.
Now using Lemma~\ref{lem_27} and arguing as in the proof of \eqref{s3_eq51} we can to prove that
$$
f \notin \mathbf{P}_{\mathbf{A},\mathbf{TBU}},
$$
which contradicts \eqref{s3_eq58}. Equality \eqref{s3_eq51_2} follows.
The proof is completed.
\end{proof}

\begin{corollary}\label{cor_28}
Let $\mathbf{A}:=\mathbf{TBU} \setminus \mathbf{CU}$. Then the equality
\begin{equation}\label{cor28_eq1}
\mathbf{P}_{\mathbf{A}, \mathbf{CU}} = \emptyset
\end{equation}
holds.
\end{corollary}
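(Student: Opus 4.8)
The plan is to combine equality~\eqref{s3_eq51_2} from Proposition~\ref{prop_27} with the fact, already isolated inside that proof, that a strongly ultrametric preserving function never turns a totally bounded non-compact ultrametric space into a compact one.

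First I would record that $\mathbf{P}_{\mathbf{A}, \mathbf{CU}} \subseteq \mathbf{P}_{\mathbf{A}, \mathbf{TBU}}$; this follows from $\mathbf{CU} \subseteq \mathbf{TBU}$ (inclusion~\eqref{s2_eq3}) by Proposition~\ref{lem_3.1}$(ii)$. Since $\mathbf{P}_{\mathbf{A}, \mathbf{TBU}} = \mathbf{PT}$ by~\eqref{s3_eq51_2}, this yields $\mathbf{P}_{\mathbf{A}, \mathbf{CU}} \subseteq \mathbf{PT}$. Hence it suffices to show that no $f \in \mathbf{PT}$ belongs to $\mathbf{P}_{\mathbf{A}, \mathbf{CU}}$.

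Next, suppose for contradiction that some $f \in \mathbf{PT}$ lies in $\mathbf{P}_{\mathbf{A}, \mathbf{CU}}$. By Example~\ref{ex23} the class $\mathbf{A} = \mathbf{TBU} \setminus \mathbf{CU}$ is non-empty, so fix any $(X,d) \in \mathbf{A}$. On one hand, $f \in \mathbf{P}_{\mathbf{A}, \mathbf{CU}}$ gives $(X, f \circ d) \in \mathbf{CU}$. On the other hand, $f \in \mathbf{PT} \subseteq \mathbf{P}_{\mathbf{U}}$, so $f \circ d$ is an ultrametric on $X$, and, by Definition~\ref{def11}, $d$ and $f \circ d$ define the same topology on $X$; since compactness is a topological property, $(X, f \circ d) \in \mathbf{CU}$ forces $(X,d) \in \mathbf{CU}$, contradicting $(X,d) \in \mathbf{TBU} \setminus \mathbf{CU}$. (This is exactly the argument already used to establish~\eqref{s3_eq56}.) Therefore $\mathbf{P}_{\mathbf{A}, \mathbf{CU}} = \emptyset$.

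There is no serious obstacle here; the one point that must not be overlooked is that $\mathbf{A} \neq \emptyset$ — otherwise $\mathbf{P}_{\mathbf{A}, \mathbf{CU}}$ would vacuously contain every function $f \colon [0, \infty) \to [0, \infty)$ — and this non-emptiness is precisely what Example~\ref{ex23} supplies.
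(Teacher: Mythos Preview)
Your proof is correct and follows essentially the same route as the paper: both use $\mathbf{CU} \subseteq \mathbf{TBU}$ together with~\eqref{s3_eq51_2} to place any $f \in \mathbf{P}_{\mathbf{A}, \mathbf{CU}}$ inside $\mathbf{PT}$, then invoke Definition~\ref{def11} on some $(X,d) \in \mathbf{A}$ to obtain the contradiction $(X,d) \in \mathbf{CU}$. Your explicit remark that $\mathbf{A} \neq \emptyset$ (via Example~\ref{ex23}) is a point the paper leaves implicit when it simply ``considers a space $(X,d) \in \mathbf{A}$''.
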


\begin{proof}
Suppose contrary that $ \mathbf{P}_{\mathbf{A}, \mathbf{CU}} \neq \emptyset$ and consider and an arbitrary function
\begin{equation}\label{cor28_eq0}
f \in \mathbf{P}_{\mathbf{A}, \mathbf{CU}}.
\end{equation}
Since $\mathbf{CU} \subseteq \mathbf{TBU}$ holds, Proposition~\ref{lem_3.1} implies that
$$
f \in \mathbf{P}_{\mathbf{A}, \mathbf{TBU}},
$$
and, consequently,
\begin{equation}\label{cor28_eq1-1}
f \in \mathbf{PT}
\end{equation}
by \eqref{s3_eq50}. Let us consider a space
\begin{equation}\label{cor28_eq2}
(X,d) \in \mathbf{A}.
\end{equation}
Then
\begin{equation}\label{cor28_eq3}
(X, f \circ d) \in \mathbf{CU}
\end{equation}
holds by \eqref{cor28_eq0}. Definition \ref{def11}, \eqref{cor28_eq1-1} and \eqref{cor28_eq3} give us
$$
(X,d) \in \mathbf{CU}
$$
but we have
$$
(X,d) \notin \mathbf{CU}
$$
by \eqref{cor28_eq2}. This contradiction shows that \eqref{cor28_eq1} holds.
\end{proof}

\section{Characterization of $\mathbf{P}_{\mathbf{U}}$ via functional equation}

Recall that a triple $(p,q,l)$ of nonnegative real numbers is a \emph{triangle triplet} iff
%\begin{equation}\label{rem16_eq1}
$$
p \leqslant q+ l , \quad q \leqslant p+ l \quad \textrm{and} \quad l \leqslant p+ q .
$$ %\end{equation}
It was noted in \cite{DM2013} that $(p,q,l) $ is a triangle triplet, iff
\begin{equation}\label{rem16_eq2}
2 \max \{p,q,l\} \leqslant p + q + l.
\end{equation}

An amenable function $f \colon [0, \infty) \to [0, \infty)$ is metric preserving iff
\[
(f(a), f(b), f(c))
\]
is a triangle triplet for every triangle triplet $(a,b,c)$. (See, for example, \cite{BD1981MS} or \cite{Corazza1999}.)

Thus we have the following

\begin{proposition}\label{newprop_22}
An amenable function $f \colon [0, \infty) \to [0, \infty)$ belongs to $\mathbf{P}_{\mathbf{M}}$ iff $f$ preserves inequality \eqref{rem16_eq2}, i.e.,
%\begin{equation}\label{rem16_eq3}
$$
2 \max \{ f(p), f(q), f(l) \} \leqslant f(p) + f(q) + f(l)
$$
%\end{equation}
holds whenever we have \eqref{rem16_eq2} for $p, q, l \in [0, \infty)$.
\end{proposition}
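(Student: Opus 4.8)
The plan is to obtain the statement as an immediate translation of the two facts recalled just above the proposition: (a) an amenable $f\colon[0,\infty)\to[0,\infty)$ belongs to $\mathbf{P}_{\mathbf{M}}$ if and only if $(f(a),f(b),f(c))$ is a triangle triplet for every triangle triplet $(a,b,c)$; and (b) a triple $(p,q,l)$ of nonnegative reals is a triangle triplet if and only if it satisfies \eqref{rem16_eq2}. Since the whole content of the proposition lies in rewriting (a) by means of (b), there is no substantive obstacle here; the proof is a careful bookkeeping of which triple the equivalence (b) is applied to.

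First I would apply (b) to the triple $(p,q,l)$ itself: the hypothesis ``$(p,q,l)$ is a triangle triplet'' is then precisely the hypothesis that \eqref{rem16_eq2} holds for $p,q,l$. Next I would apply (b) to the triple $(f(p),f(q),f(l))$, which again consists of nonnegative reals because $f$ maps $[0,\infty)$ into $[0,\infty)$; this shows that ``$(f(p),f(q),f(l))$ is a triangle triplet'' is equivalent to
$$
2\max\{f(p),f(q),f(l)\}\leqslant f(p)+f(q)+f(l).
$$
Combining these two equivalences, the condition ``$(f(a),f(b),f(c))$ is a triangle triplet whenever $(a,b,c)$ is'' is \emph{verbatim} the condition that $f$ preserves \eqref{rem16_eq2} in the sense stated in the proposition. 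Invoking (a) then yields $f\in\mathbf{P}_{\mathbf{M}}\iff f$ preserves \eqref{rem16_eq2}, which is the assertion.

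The only points deserving attention are purely formal: one must keep the amenability hypothesis in force throughout so that (a) is applicable, and one must make sure that ``$f$ preserves \eqref{rem16_eq2}'' is read exactly as the proposition phrases it, namely that \eqref{rem16_eq2} for $(f(p),f(q),f(l))$ is asserted only under the assumption that \eqref{rem16_eq2} holds for $(p,q,l)$. (If desired, the equivalence (b) can be recalled in one line: assuming without loss of generality that $p=\max\{p,q,l\}$, the inequality $2p\leqslant p+q+l$ is the same as $p\leqslant q+l$, while the remaining triangle inequalities $q\leqslant p+l$ and $l\leqslant p+q$ are automatic from the maximality of $p$.)
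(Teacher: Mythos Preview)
Your proposal is correct and matches the paper's approach exactly: the paper does not give a formal proof of this proposition but presents it with ``Thus we have the following'' immediately after stating facts (a) and (b), so your derivation is precisely the intended one. The only addition is your parenthetical one-line verification of (b), which the paper simply cites to \cite{DM2013}.
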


\begin{remark}\label{rem_23}
This proposition was first proven in \cite{DM2013} without using the concept of the triangle triplet.
\end{remark}

The following lemma is a reformulation of Lemma~20 of paper \cite{PTAbAppAn2014}.

\begin{lemma}\label{lemma22}
Let $(X,d)$ be a three-point semimetric space with $X=\{x,y,z\}$ and
\begin{equation}\label{lem22_eq1}
d(x,y)=p, \quad d(y,z) =q \quad \textit{and} \quad d(z,x) = l.
\end{equation}
Then the inequalities
\begin{equation}\label{pr_prop22_eq2}
p \leqslant \max \{q,l\}, \quad q \leqslant \max \{p,l\} \quad \textrm{and} \quad l \leqslant \max \{p,q\}
\end{equation}
simultaneously hold iff $(X,d) \in \mathbf{U}$.
\end{lemma}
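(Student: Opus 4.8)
The plan is to recognize that, in a three-point space, the three inequalities in \eqref{pr_prop22_eq2} are precisely the nontrivial instances of the strong triangle inequality, and then to use the elementary fact that the strong triangle inequality automatically implies the ordinary one.

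First I would treat the implication ``$(X,d) \in \mathbf{U} \Rightarrow$ \eqref{pr_prop22_eq2}''. Assuming the strong triangle inequality $d(a,b) \leqslant \max\{d(a,c), d(c,b)\}$ for all $a,b,c \in X$, I would specialize $(a,b,c)$ to $(x,y,z)$, $(y,z,x)$ and $(z,x,y)$ and read off, using the notation \eqref{lem22_eq1} and the symmetry of $d$, exactly the three inequalities $p \leqslant \max\{q,l\}$, $q \leqslant \max\{p,l\}$ and $l \leqslant \max\{p,q\}$.

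For the converse I would assume \eqref{pr_prop22_eq2} and verify the strong triangle inequality for every triple $a,b,c \in X$. When two of $a,b,c$ coincide this is immediate: if $a=b$ the left-hand side is $0$; if $c=a$ or $c=b$ the right-hand side equals $d(a,b)$. When $a,b,c$ are pairwise distinct, then $\{a,b,c\}=\{x,y,z\}$, and, modulo symmetry of $d$, the required inequality is one of the three inequalities in \eqref{pr_prop22_eq2} (the point $c$ playing the role of the ``apex''). Thus the strong triangle inequality holds on all of $X$; since $\max\{u,v\} \leqslant u+v$ for $u,v \geqslant 0$, this also yields the ordinary triangle inequality, so the semimetric $d$ is in fact a metric, hence an ultrametric, and $(X,d) \in \mathbf{U}$.

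The proof is essentially a finite case check, so I do not anticipate a genuine obstacle; the only mildly delicate points are to remember that among the distinct triples one needs only the three ``cyclic'' ones (the remaining instances follow by symmetry of $d$) and to note that verifying the strong triangle inequality already suffices to upgrade the given semimetric space to a metric, and therefore ultrametric, space.
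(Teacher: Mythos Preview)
Your argument is correct and complete. The paper does not supply its own proof of this lemma; it simply records the statement as a reformulation of Lemma~20 in \cite{PTAbAppAn2014}. Your direct verification---reading off the three nontrivial instances of the strong triangle inequality, handling the degenerate triples where two points coincide, and then observing that the strong triangle inequality forces the ordinary one so that the semimetric is already a metric---is exactly the natural proof one would expect, and nothing is missing.
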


The next proposition can be considered as an ``ultrametric'' modification of Proposition~\ref{newprop_22}.

\begin{proposition}\label{prop_22}
Let $f \colon [0, \infty) \to [0, \infty)$ be amenable. Then the following statements are equivalent:
\begin{itemize}
\item[$(i)$] $f \in \mathbf{P}_{\mathbf{U}}$;

\item[$(ii)$] The equality
\begin{equation}\label{prop22_eq1}
\begin{gathered}
\min \{ \max \{ f(p), f(q) \}, \max \{ f(q), f(l)\}, \max \{ f (p), f(l)\} \} \\
= \max \{ f(p), f(q), f(l)\}
\end{gathered}
\end{equation}
holds whenever $p, q, l \in [0, \infty)$ and
\begin{equation}\label{prop22_eq2}
\min \{ \max \{ p,q \} , \max \{ q,l\}, \max \{ p, l\} \} = \max \{ p, q, l\} .
\end{equation}
\end{itemize}
\end{proposition}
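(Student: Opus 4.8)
The plan is to reduce the two displayed equalities \eqref{prop22_eq1} and \eqref{prop22_eq2} to the inequalities of Lemma~\ref{lemma22} and then to invoke Theorem~\ref{t2.4}. The starting point is the elementary observation that, for arbitrary $a,b,c\in[0,\infty)$,
$$\min\{\max\{a,b\},\max\{b,c\},\max\{a,c\}\}=\max\{a,b,c\}$$
holds if and only if $a\leqslant\max\{b,c\}$, $b\leqslant\max\{a,c\}$ and $c\leqslant\max\{a,b\}$ simultaneously hold. Indeed, each pairwise maximum is trivially $\leqslant\max\{a,b,c\}$, so the left-hand side never exceeds the right-hand side, and equality forces every pairwise maximum to attain $\max\{a,b,c\}$, which is exactly the three stated inequalities; conversely, $a\leqslant\max\{b,c\}$ gives $\max\{b,c\}\geqslant\max\{a,b,c\}$, hence $\max\{b,c\}=\max\{a,b,c\}$, and likewise for the other two pairwise maxima. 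Reading this with $(a,b,c)=(p,q,l)$ shows that \eqref{prop22_eq2} is equivalent to \eqref{pr_prop22_eq2}, and reading it with $(a,b,c)=(f(p),f(q),f(l))$ shows that \eqref{prop22_eq1} is equivalent to the analogous three inequalities for $f(p),f(q),f(l)$.

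For $(i)\Rightarrow(ii)$: if $f\in\mathbf{P}_{\mathbf{U}}$, then $f$ is amenable and increasing by Theorem~\ref{t2.4}, so $f(\max\{u,v\})=\max\{f(u),f(v)\}$ for all $u,v\in[0,\infty)$. Given $p,q,l$ satisfying \eqref{prop22_eq2}, i.e.\ \eqref{pr_prop22_eq2}, the inequality $p\leqslant\max\{q,l\}$ yields $f(p)\leqslant f(\max\{q,l\})=\max\{f(q),f(l)\}$, and the two symmetric inequalities follow in the same way; by the reduction above this is precisely \eqref{prop22_eq1}.

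For $(ii)\Rightarrow(i)$: since $f$ is amenable by hypothesis, Theorem~\ref{t2.4} reduces the claim to showing that $f$ is increasing. Suppose it is not, so there exist $c_1,c_2$ with $0\leqslant c_1<c_2$ and $f(c_1)>f(c_2)$. Apply $(ii)$ to the triple $(p,q,l)=(c_2,c_2,c_1)$: here $\max\{p,q,l\}=c_2$ is attained by both $p$ and $q$, so \eqref{pr_prop22_eq2}, equivalently \eqref{prop22_eq2}, holds, whereas for the images the inequality $f(l)\leqslant\max\{f(p),f(q)\}$ reads $f(c_1)\leqslant f(c_2)$, which is false; hence \eqref{prop22_eq1} fails, contradicting $(ii)$. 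Therefore $f$ is increasing, and $(i)$ holds.

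I do not anticipate a real obstacle; the only point that needs a moment's attention is that $p,q,l$ — and thus $f(p),f(q),f(l)$ — may be zero, so one should confirm that no step secretly uses positivity. It does not: amenability ($f(0)=0$) is exactly what keeps the monotone inequalities valid at $0$, and it is also what rules out degenerate solutions of $(ii)$ such as $f\equiv 0$. Alternatively, the whole argument can be phrased through Lemma~\ref{lemma22} and equality \eqref{lem17_eq1} of Lemma~\ref{lem_15}, by reading \eqref{prop22_eq2} and \eqref{prop22_eq1} as the statements that the three-point spaces with distances $p,q,l$ and $f(p),f(q),f(l)$ lie in $\mathbf{U}$, so that $(ii)$ becomes membership in $\mathbf{P}_{\mathbf{U3},\mathbf{U}}=\mathbf{P}_{\mathbf{U}}$; the direct computation above merely sidesteps the bookkeeping for the degenerate triples.
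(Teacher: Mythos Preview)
Your proof is correct. Both you and the paper begin with the same reduction: the displayed min--max equality on a triple $(a,b,c)$ is equivalent to the three ultrametric-triangle inequalities $a\leqslant\max\{b,c\}$, etc. From there, however, the paper takes a slightly different path: it interprets \eqref{prop22_eq2} and \eqref{prop22_eq1} via Lemma~\ref{lemma22} as the statements that the three-point spaces with side lengths $(p,q,l)$ and $(f(p),f(q),f(l))$ are ultrametric, thereby identifying $(ii)$ with $f\in\mathbf{P}_{\mathbf{U3}}$, and then invokes equality~\eqref{cor20_eq1} ($\mathbf{P}_{\mathbf{U}}=\mathbf{P}_{\mathbf{U3}}$) to conclude. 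You instead go straight through Theorem~\ref{t2.4}: monotonicity gives $(i)\Rightarrow(ii)$ immediately, and for $(ii)\Rightarrow(i)$ the concrete counterexample triple $(c_2,c_2,c_1)$ forces monotonicity. Your route is marginally more self-contained (it avoids the detour through $\mathbf{P}_{\mathbf{U3}}$), while the paper's phrasing has the conceptual advantage of making transparent why Proposition~\ref{prop_22} is the ``ultrametric'' analogue of Proposition~\ref{newprop_22}. You in fact sketch the paper's route in your closing paragraph, so the two write-ups are close variants rather than genuinely distinct arguments.
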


\begin{proof}
Let $(X,d)$ be a three-point semimetric space satisfying \eqref{lem22_eq1}. First of all we will show that \eqref{prop22_eq2} is valid iff the inequalities in \eqref{pr_prop22_eq2} simultaneously hold.

Assume, without loss of generality, that
\begin{equation}\label{pr_prop22_eq3}
p \leqslant q \leqslant l .
\end{equation}
Then all inequalities in \eqref{pr_prop22_eq2} are satisfied iff $l=q$. Similarly, using \eqref{pr_prop22_eq3} we obtain that $\max \{p, q, l\} = l$ and
\begin{equation}\label{pr_prop22_eq4}
\min \{ \max \{p,q\}, \max \{q,l\}, \max \{p,l\}\} = \max \{p,q\} = q,
\end{equation}
i.e., \eqref{prop22_eq1} holds iff $q=l$.

Thus, \eqref{pr_prop22_eq2} holds iff we have \eqref{prop22_eq2}. The last statement and Lemma~\ref{lemma22} imply that $(X,d)$ is ultrametric iff \eqref{prop22_eq2} is valid.

Similarly we obtain that a semimetric space $(X, f \circ d)$ is ultrametric iff \eqref{prop22_eq1} holds. Thus we have proven the equivalence
\begin{equation}\label{pr_prop22_eq5}
(f \in \mathbf{P}_{\mathbf{U3}}) \Leftrightarrow (ii).
\end{equation}
Now the validity $(i) \Leftrightarrow (ii)$ follows from \eqref{pr_prop22_eq5} and equality~\eqref{cor20_eq1}.
\end{proof}

\begin{corollary}\label{cor_34}
Let $f \colon [0, \infty) \to [0, \infty)$ be amenable. Then the following statements are equivalent:
\begin{itemize}
\item[$(i)$] $f \in \mathbf{P}_{\mathbf{M}} \cap \mathbf{P}_{\mathbf{U}}$;

\item[$(ii)$] For all $p, q, l \in [0, \infty)$, $f$ preserves the inequality
$$
2 \max \{ p,q,l \} \leqslant p+ q+ l .
$$
and the equality
$$
\min \{ \max \{ p,q \} , \max \{ q,l\}, \max \{ l, p\} \} = \max \{ p, q, l\};
$$
\item[$(iii)$] $f$ is subadditive and increasing.
\end{itemize}
\end{corollary}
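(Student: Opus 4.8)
The plan is to reduce everything to results already available in the excerpt: the equivalence $(i)\Leftrightarrow(ii)$ is just a juxtaposition of Proposition~\ref{newprop_22} and Proposition~\ref{prop_22}, while the equivalence with $(iii)$ comes from the Dobo\v{s} criterion recalled in Example~\ref{ex_11}, from Theorem~\ref{t2.4}, and from a single application of the triangle-triplet criterion to a degenerate triplet.

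First I would dispose of $(i)\Leftrightarrow(ii)$. By Proposition~\ref{newprop_22}, an amenable $f$ belongs to $\mathbf{P}_{\mathbf{M}}$ iff it preserves inequality~\eqref{rem16_eq2}; by Proposition~\ref{prop_22}, an amenable $f$ belongs to $\mathbf{P}_{\mathbf{U}}$ iff it preserves the equality $\min\{\max\{p,q\},\max\{q,l\},\max\{l,p\}\}=\max\{p,q,l\}$. Hence $f\in\mathbf{P}_{\mathbf{M}}\cap\mathbf{P}_{\mathbf{U}}$ holds iff $f$ preserves both relations simultaneously, which is precisely condition $(ii)$.

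Next, for $(iii)\Rightarrow(i)$: if $f$ is amenable, increasing and subadditive, then $f\in\mathbf{P}_{\mathbf{M}}$ by the criterion recalled in Example~\ref{ex_11}, and $f\in\mathbf{P}_{\mathbf{U}}$ by Theorem~\ref{t2.4} (as $f$ is increasing and amenable), so $f\in\mathbf{P}_{\mathbf{M}}\cap\mathbf{P}_{\mathbf{U}}$. Conversely, for $(i)\Rightarrow(iii)$: from $f\in\mathbf{P}_{\mathbf{U}}$ and Theorem~\ref{t2.4} we obtain that $f$ is increasing, so it remains to check subadditivity. Fix $x,y\in[0,\infty)$. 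The triple $(x,y,x+y)$ satisfies~\eqref{rem16_eq2}, since $2\max\{x,y,x+y\}=2(x+y)=x+y+(x+y)$; therefore, by Proposition~\ref{newprop_22} applied to $f\in\mathbf{P}_{\mathbf{M}}$, the triple $(f(x),f(y),f(x+y))$ also satisfies~\eqref{rem16_eq2} and hence is a triangle triplet, which yields $f(x+y)\leqslant f(x)+f(y)$. As $x,y$ are arbitrary, $f$ is subadditive.

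The proof is essentially bookkeeping; the only step that is not an immediate citation is the last one, and even there the single idea --- that applying the metric preserving property to the degenerate triangle triplet $(x,y,x+y)$ forces subadditivity, with monotonicity supplied separately by the ultrametric preserving property --- is routine. I do not expect any genuine obstacle beyond quoting the correct forms of the already established criteria.
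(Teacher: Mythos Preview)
Your proof is correct and follows essentially the same structure as the paper's: the equivalence $(i)\Leftrightarrow(ii)$ via Propositions~\ref{newprop_22} and~\ref{prop_22}, the implication $(iii)\Rightarrow(i)$ via Example~\ref{ex_11} and Theorem~\ref{t2.4}, and $(i)\Rightarrow(iii)$ by extracting monotonicity from Theorem~\ref{t2.4} and subadditivity from $f\in\mathbf{P}_{\mathbf{M}}$. The only difference is that for the subadditivity step the paper simply cites the standard fact that every metric preserving function is subadditive (Proposition~1 in Dobo\v{s}'s lectures), whereas you supply the one-line argument via the degenerate triangle triplet $(x,y,x+y)$; this is precisely how that proposition is proved, so your version is self-contained but not a different approach.
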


\begin{proof}
The equivalence $(i) \Leftrightarrow (ii)$ is valid by Proposition~\ref{newprop_22} and Proposition~\ref{prop_22}.

The validity of $(iii) \Rightarrow (i)$ follows from Example~\ref{ex_11} and Theorem~\ref{t2.4}.

Let us prove the validity of $(i) \Rightarrow (iii)$. Let $(i)$ be valid. Then $f$ belongs to $\mathbf{P}_{\mathbf{U}}$ and, consequently, $f$ is increasing by Theorem~\ref{t2.4}. Similarly $(i)$ implies $f \in \mathbf{P}_{\mathbf{M}}$. Now to complete the proof it suffices to note that every metric preserving function is subadditive (see, for example, Proposition~1 in \cite[p.~9]{Dobos1998}).
\end{proof}

The following question seems to be interesting.

\begin{question}
Is there a subclass $\mathbf{X}$ of the class $\mathbf{M}$ such that
$$
\mathbf{P}_{\mathbf{X}} = \mathbf{P}_{\mathbf{M}} \cap \mathbf{P}_{\mathbf{U}}?
$$
\end{question}

\section{Two conjectures}

Recall that a metric space $(X,d)$ is \emph{boundedly compact} or \emph{proper} if each bounded closed subset of $X$ is compact. Let us denote by $\mathbf{A}$ the class of all unbounded boundedly compact ultrametric spaces.

\begin{conjecture}[Prove or disprove]\label{conj_1}
A function $f : [0, \infty) \to [0, \infty)$ belongs to $\mathbf{P}_{\mathbf{A}}$ iff $f \in \mathbf{P}_{\mathbf{CU}}$ and
\begin{equation}\label{conj_eq1}
\lim\limits_{t \to +\infty} f(t) = + \infty.
\end{equation}
\end{conjecture}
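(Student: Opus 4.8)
The plan is to adapt the machinery built for Theorem~\ref{th18} to the class $\mathbf{A}$ of unbounded boundedly compact ultrametric spaces, using the extra limit condition~\eqref{conj_eq1} to control the behaviour of $f$ at infinity. First I would establish the inclusion $\mathbf{P}_{\mathbf{A}} \subseteq \mathbf{P}_{\mathbf{U}}$, most plausibly by showing (as in Example~\ref{ex23}) that every $(X,d) \in \mathbf{U3}$ is isometrically embeddable into some unbounded boundedly compact ultrametric space and then invoking Theorem~\ref{th22}; a natural candidate is a variant of the universal space $(\mathbb{R}_0^{+2}, d_2^+)$ restricted to a range of distances forming an unbounded, locally finite set (say $\{r_n : n \in \mathbb{Z}\}$ with $r_n \to 0$ as $n \to -\infty$ and $r_n \to +\infty$ as $n \to +\infty$), whose closed balls are finite, hence compact. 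Granting $\mathbf{P}_{\mathbf{A}} \subseteq \mathbf{P}_{\mathbf{U}}$, Theorem~\ref{t2.4} tells us that any $f \in \mathbf{P}_{\mathbf{A}}$ is amenable and increasing, so the two sides of the claimed equivalence both live inside the increasing amenable functions and we may reason throughout in terms of $\mathbf{P}_{\mathbf{U}} \cap \mathbf{P}^0$ (which is $\mathbf{PT} = \mathbf{P}_{\mathbf{CU}}$ by Lemma~\ref{l26} and Theorem~\ref{th18}) together with~\eqref{conj_eq1}.

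For the forward direction, suppose $f \in \mathbf{P}_{\mathbf{A}}$. To get $f \in \mathbf{P}_{\mathbf{CU}}$ I would argue that $f \notin \mathbf{PT} = \mathbf{P}_{\mathbf{CU}}$ forces, by Lemma~\ref{lem_27}, a constant $a \in (0,\infty)$ with $f(s) \geqslant a$ for all $s > 0$; then taking any $(X,d) \in \mathbf{A}$ of infinite cardinality (such spaces exist — e.g. the candidate above) one sees that $(X, f\circ d)$ is uniformly discrete, hence every bounded subset is finite, hence $(X, f\circ d)$ would actually be boundedly compact, so this alone does not contradict membership; the real point is that $(X,f\circ d)$ must still be \emph{unbounded}, which it is iff $f$ is unbounded, and here $f \leqslant$ (some value) is possible, so I must instead exhibit a specific $(X,d)\in\mathbf{A}$ on which $f$ bounded makes $f\circ d$ bounded: take the candidate space with its unbounded distance set, and if $f$ is bounded then $\sup f(d(x,y)) < \infty$, so $(X,f\circ d) \notin \mathbf{A}$. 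Hence $f$ is unbounded and increasing, giving~\eqref{conj_eq1}; to get $f\in\mathbf{P}_{\mathbf{CU}}$, apply $f$ to a compact infinite ultrametric space embedded as a \emph{bounded} piece of a space in $\mathbf{A}$ (adjoin one far-away point to make it unbounded while keeping bounded closed sets compact) — if $f \notin \mathbf{P}^0$ then by Lemma~\ref{lem_27} the bounded infinite part of the image is uniformly discrete and not compact, contradicting $(X,f\circ d)\in\mathbf{A}$. For the reverse direction, assume $f \in \mathbf{P}_{\mathbf{CU}}$ and~\eqref{conj_eq1}; given $(X,d) \in \mathbf{A}$, the identity map $(X,d) \to (X,f\circ d)$ is a homeomorphism (as $f \in \mathbf{PT}$), so $(X,f\circ d)$ is an ultrametric space inducing the same topology; I would check that a closed $f\circ d$-ball $\overline{B}_{f\circ d}(x,\varepsilon)$ is contained in a closed $d$-ball (using that $f$ is increasing and unbounded, so $\{t : f(t) \leqslant \varepsilon\}$ is bounded), hence is a closed subset of a compact set, hence compact; and $(X,f\circ d)$ is unbounded because $f\circ d$ is unbounded (as $d$ is unbounded and $f(t)\to\infty$). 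Thus $(X,f\circ d)\in\mathbf{A}$.

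The main obstacle I anticipate is the construction in the first paragraph: one needs a single countable (or well-understood) unbounded boundedly compact ultrametric space that is \emph{simultaneously} universal for $\mathbf{U3}$ and rich enough — of infinite cardinality with an unbounded distance set and with an isometric copy of every compact infinite ultrametric space sitting inside a bounded ball — so that all the contradictions above can be produced against membership in $\mathbf{P}_{\mathbf{A}}$. A clean way is probably to take the Urysohn-type ultrametric space over $R = \{2^n : n \in \mathbb{Z}\}$ mentioned in Remark~\ref{rem24}: its closed balls have distance sets bounded above, hence (by the locally finite branching forced by $R$ being order-isomorphic to $\mathbb{Z}$) are compact, it is unbounded, and its universality handles both the $\mathbf{U3}$-embedding needed for Theorem~\ref{th22} and the compact-subspace arguments. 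Verifying bounded compactness of this space — i.e. that each closed ball is complete and totally bounded — is the one genuinely technical lemma; everything else reduces, as above, to Lemma~\ref{lem_27}, Lemma~\ref{l26}, Theorem~\ref{th18}, and the elementary observation that an increasing unbounded $f$ has bounded sublevel sets.
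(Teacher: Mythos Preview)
The paper does not prove Conjecture~\ref{conj_1}; it is stated in Section~5 as an open problem (``Prove or disprove'') with no accompanying argument, so there is no proof in the paper to compare your proposal against.

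On the proposal itself: the architecture is right --- use Theorem~\ref{th22} to get $\mathbf{P}_{\mathbf{A}} \subseteq \mathbf{P}_{\mathbf{U}}$, then handle unboundedness of $f$ and continuity at $0$ separately via Lemma~\ref{lem_27}, and for the reverse direction use that $f \in \mathbf{PT}$ makes the identity a homeomorphism while increasing unbounded $f$ has bounded sublevel sets. Your reverse direction is essentially complete. In the forward direction there are fixable slips: ``adjoin one far-away point to make it unbounded'' fails (one extra point leaves the space bounded; you need a sequence $p_n$ at distances $M_n \to \infty$), and ``uniformly discrete, hence every bounded subset is finite'' is simply false (an infinite equilateral set is uniformly discrete, bounded, and infinite). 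More seriously, the $R$-Urysohn space over $R = \{2^n : n \in \mathbb{Z}\}$ is \emph{not} boundedly compact: by universality its closed ball of radius $1$ contains an isometric copy of an infinite equilateral space of diameter $1$, which is closed, bounded, and non-compact. You also do not need a single universal object. Theorem~\ref{th22} only asks that each $(X,d) \in \mathbf{U3}$ embed into \emph{some} member of $\mathbf{A}$, which is easy: adjoin to the given triangle a sequence of points at distances tending to $\infty$; the result is locally finite, hence boundedly compact, and unbounded. For the continuity-at-$0$ step it suffices to exhibit one fixed $(X,d) \in \mathbf{A}$ containing an infinite compact closed ball (e.g.\ a convergent sequence together with its limit, plus a tail of far points $p_n$ as above). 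With these corrections the outline becomes a proof.
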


A metric space $(X,d)$ is \emph{locally finite} if $|S| < \infty$ holds for every bounded subset $S$ of $X$. Let us denote by $\mathbf{B}$ the class of all uniformly discrete unbounded locally finite ultrametric spaces.

\begin{conjecture}[Prove or disprove]\label{conj_2}
A function $f : [0, \infty) \to [0, \infty)$ belongs to $\mathbf{P}_{\mathbf{B}}$ iff $f \in \mathbf{P}_{\mathbf{U}}$ and \eqref{conj_eq1} holds.
\end{conjecture}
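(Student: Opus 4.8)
The plan is to adapt the strategy used for Theorem~\ref{th18} and Proposition~\ref{prop_27}: first establish $\mathbf{P}_{\mathbf{B}} \subseteq \mathbf{P}_{\mathbf{U}}$ by exhibiting, for every $(X,d) \in \mathbf{U3}$, an isometric embedding into some space from $\mathbf{B}$, then invoke Theorem~\ref{th22}. The obstacle is that $\mathbf{B}$ consists of \emph{infinite} uniformly discrete locally finite spaces, so a single three-point space will not do; instead I would embed $(X,d)$ into a countable uniformly discrete ultrametric space whose distance set is, say, $D(X) \cup \{n : n \in \mathbb{N}, n > \max D(X)\}$, realized as a subset of a suitable product of finite ultrametric spaces — for instance, attach to the three given points an infinite sequence of isolated points pairwise at integer distances exceeding $\max D(X)$, using a construction analogous to $(\mathbb{R}_0^{+2}, d_2^{+})$ but with the relevant coordinates ranging over $D(X) \cup \mathbb{N}_{>\max D(X)}$. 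One checks directly that such a space is ultrametric, uniformly discrete (all nonzero distances are $\geqslant \min D(X) > 0$), unbounded, and locally finite (a bounded set meets only finitely many of the integer-indexed points and lies inside a finite ball), hence belongs to $\mathbf{B}$; and it contains an isometric copy of every $(X,d)\in\mathbf{U3}$ by the same two-case analysis (two-element vs. singleton $D(X)$) used in Example~\ref{example_19}. This gives $\mathbf{P}_{\mathbf{B}} \subseteq \mathbf{P}_{\mathbf{U}}$.

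For the forward inclusion, suppose $f \in \mathbf{P}_{\mathbf{B}}$, so $f \in \mathbf{P}_{\mathbf{U}}$ by the previous paragraph, i.e.\ $f$ is amenable and increasing by Theorem~\ref{t2.4}. To see that \eqref{conj_eq1} must hold, argue by contradiction: if $\lim_{t\to+\infty} f(t) = L < \infty$, take an unbounded uniformly discrete locally finite ultrametric space $(X,d)$ — e.g.\ $X = \mathbb{N}$ with $d(m,n) = \max\{m,n\}$ for $m \neq n$, which lies in $\mathbf{B}$ — and observe that $f \circ d$ takes only values in $[0, L]$ on $X$, so $(X, f \circ d)$ is bounded; but $\mathbf{B}$ requires unboundedness, contradicting $(X, f\circ d) \in \mathbf{B}$. (One should double-check that $(X, f\circ d)$ still lies in $\mathbf{B}$ except for boundedness — it is ultrametric since $f \in \mathbf{P}_\mathbf{U}$, uniformly discrete since $f$ is amenable and increasing forces $f(d(x,y)) \geqslant f(\min_{x\neq y} d(x,y)) > 0$, and locally finite since a bounded subset of $(X,f\circ d)$ is bounded in $(X,d)$ by monotonicity of $f$, using that $f$ is unbounded below near $0$ would not be needed here.) Thus $f \in \mathbf{P}_{\mathbf{U}}$ and \eqref{conj_eq1} both hold.

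For the converse, assume $f \in \mathbf{P}_{\mathbf{U}}$ and \eqref{conj_eq1}, and let $(X,d) \in \mathbf{B}$. Then $(X, f\circ d) \in \mathbf{U}$ automatically; uniform discreteness follows as above since $f$ amenable and increasing gives $f(d(x,y)) \geqslant f(\varepsilon) > 0$ where $\varepsilon$ is a uniform-discreteness constant for $(X,d)$; and local finiteness: a subset $S$ bounded in $(X, f\circ d)$, say with all mutual $f\circ d$-distances $\leqslant M$, has all mutual $d$-distances bounded by some $N$ because \eqref{conj_eq1} ensures $f(t) > M$ for $t$ large, so $S$ is bounded in $(X,d)$ and hence finite. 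It remains to verify unboundedness of $(X, f \circ d)$: since $(X,d)$ is unbounded there are points with arbitrarily large $d$-distance, and \eqref{conj_eq1} makes the corresponding $f\circ d$-distances arbitrarily large, so $(X, f\circ d)$ is unbounded. Therefore $(X, f\circ d) \in \mathbf{B}$, completing the proof. The main obstacle, as noted, is producing the right universal-type space in $\mathbf{B}$ for $\mathbf{U3}$; everything else is a routine combination of monotonicity of $f$ with the definitions of uniform discreteness, local finiteness, and boundedness.
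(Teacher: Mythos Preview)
The paper does not prove Conjecture~\ref{conj_2}; it is left open in Section~5 as a ``prove or disprove'' problem, so there is no proof in the paper to compare against. Your proposal therefore goes beyond the paper, and the good news is that your outline is essentially correct and settles the conjecture affirmatively.

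A few remarks on the details. Your embedding of an arbitrary $(X,d)\in\mathbf{U3}$ into a member of $\mathbf{B}$ is fine: adjoining to the three points a sequence $(y_n)_{n\in\mathbb{N}}$ with mutual distances $\max\{N_n,N_m\}$ and distances $N_n$ to each $x_k$, where $(N_n)$ is any strictly increasing integer sequence with $N_1>\max D(X)$, yields an ultrametric space that is uniformly discrete (all nonzero distances are $\geqslant\min D(X)$), unbounded, and locally finite, so Theorem~\ref{th22} applies and gives $\mathbf{P}_{\mathbf{B}}\subseteq\mathbf{P}_{\mathbf{U}}$. The forward implication (unboundedness of $f$) via $X=\mathbb{N}$ with $d(m,n)=\max\{m,n\}$ is correct, and the converse direction is a clean check of the four defining properties of $\mathbf{B}$, all of which go through as you describe.

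One small correction: the parenthetical in your forward direction is both unnecessary and contains an error. You need only observe that $(X,f\circ d)$ is bounded, hence not in $\mathbf{B}$, contradicting $f\in\mathbf{P}_{\mathbf{B}}$; no further properties of $(X,f\circ d)$ need be checked. Moreover, the claim there that ``a bounded subset of $(X,f\circ d)$ is bounded in $(X,d)$ by monotonicity of $f$'' is false precisely in the case under consideration, since $f$ is assumed bounded --- indeed the whole space is $f\circ d$-bounded but $d$-unbounded. Simply delete the parenthetical and your argument is clean.
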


\section*{Acknowledgment}

The author would like to thank the anonymous referee whose nontrivial remarks strongly helped in preparation of the final version of the article.

\section*{Funding information}

The author was supported by the Academy of Finland (Project ``Labeled trees and totally bounded ultrametric spaces'').

\end{document}